\newcommand{\dint}{\displaystyle\int}
\theoremstyle{plain}
\newtheorem{theorem}{Theorem}[section]
\newtheorem{hy}{Assumption}[section]
\newtheorem{proposition}[theorem]{Proposition}
\theoremstyle{definition}
\newtheorem{definition}[theorem]{Definition}
\theoremstyle{remark}
\newtheorem{remark}[theorem]{Remark}
\numberwithin{equation}{section}
\numberwithin{theorem}{section}
\begin{document}
	\renewcommand{\thefootnote}{\fnsymbol{footnote}}
	
	\begin{center}
		{\Large \textbf{The Impact of Pinning Points on Memorylessness in L\'evy Random Bridges}} \\[0pt]
		~\\[0pt] \textbf{Mohammed Louriki} \footnote[1]{Mathematics Department, Faculty of Sciences Semalalia, Cadi Ayyad University, Boulevard Prince Moulay Abdellah,	P. O. Box 2390, Marrakesh 40000, Morocco. 
			E-mail: \texttt{m.louriki@uca.ac.ma}}
		\\[0pt]
	\end{center}
	\begin{abstract}
		Random Bridges have gained significant attention in recent years due to their potential applications in various areas, particularly in information-based asset pricing models. This paper aims to explore the potential influence of the pinning point's distribution on the memorylessness and stochastic dynamics of the bridge process. We introduce L\'evy bridges with random length and random pinning points and analyze their Markov property. Our study demonstrates that the Markov property of L\'evy bridges depends on the nature of the distribution of their pinning points. The law of any random variables can be decomposed into singular continuous, discrete, and absolutely continuous parts with respect to the Lebesgue measure (Lebesgue's decomposition theorem). We show that the Markov property holds when the pinning points' law does not have an absolutely continuous part. Conversely, the L\'evy bridge fails to exhibit Markovian behavior when the pinning point has an absolutely continuous part.
	\end{abstract}
	
	\textbf{Keywords:} L\'evy processes, L\'evy bridges, Markov processes.
	\\ 
	\\
	\textbf{MSC:} 60G40, 60G51, 60G52, 60J25, 60E07, 60E10, 60F99.

	\begin{center}
		\section{Introduction}
		\label{Setion_1}
	\end{center}
	Random Bridge processes have proven to be a successful framework for modeling the information related to future non-defaultable cash flows and the default time in financial models. In this context, the length of the bridge serves as a representation of the default time, while the pinning point models the cash flows. Studies by various researchers have demonstrated the effectiveness and applicability of bridge processes in financial modeling contexts. For instance, to model the flow of information concerning a cash flow occurring at a maturity date $T$, represented by a random variable $Z_T$, Brody et al. \cite{BHM2007} and \cite{BHM2008} used the completed natural filtration generated by the Brownian bridge with length $T$ and pinning point $Z_T$. While, in \cite{BHM}, Brody et al. used random gamma bridges, gamma bridges with deterministic length and random pinning point, to model accumulated losses of large credit portfolios in credit risk management. Moreover, in \cite{HHM} the information-based asset-pricing framework of Brody, Hughston, and Macrina is extended to include a wider class of models for market information. To model the information flow, they introduce a class of processes called L\'evy random bridges, L\'evy bridges with deterministic length and random pinning point, generalising the Brownian bridge and gamma bridge information processes. In \cite{HHM2015} the authors develop a class of non-life reserving models using stable-$1/2$ random bridges to simulate the accumulation of paid claims, allowing for an essentially arbitrary choice of a priori distribution for the ultimate loss. For more research paper in the context of modeling the information through the completed filtration generated by bridges with deterministic length and random pinning point we quote \cite{BL}, \cite{EHL(Brownian-Levy)},and \cite{RuYu}. However, In their work \cite{BBE}, motivated by modelling information concerning credit default times, Bedini, Buckdahn and Engelbert used Brownian bridges with random length, to model the flow of information concerning the default time of a financial company or state. In order to include a wider class of information processes for market filtration Eraoui et al., see \cite{EL}, \cite{EHL}, \cite{EHL(Levy)}, and  introduced and studied Gaussian, Gamma, and L\'evy bridges with random length. This opens the way for different application for information-based approaches for credit risk. In \cite{L}, the Brownian bridge concept is extended by considering uncertainty not only for the time level but also for the pinning point. The paper suggests a Brownian bridge approach for the information flow for the switching behaviour of gas storage contract holders using a two-point distribution for the pinning point. On a different note, in \cite{Lpuqr}, the joint modeling of a non-defaultable cash flow and the insolvency time of the writer of the underlying asset is investigated by introducing a suitable random bridge.
	
	The primary focus of this paper is twofold: firstly, to introduce L\'evy bridges with random length and random pinning point; secondly, to investigate their Markov property. Note that in \cite{L}, the bridge with random length $\tau$ and pinning point $Z$ associated with a Brownian motion is constructed by replacing the deterministic length $r$ and pinning point $z$ with the corresponding values of the random time $\tau$ and random variable $Z$. The availability of an explicit representation of the Brownian bridge is crucial for this construction. However, it should be noted that for a general L\'evy process, the existence of an explicit representation of the bridge is not guaranteed. To introduce a L\'evy bridge with random length $\tau$ and random pinning point $Z$ we choose the following approach: conditionally on the events $\tau=r$ and $Z=z$ the law of the L\'evy bridge with random length $\tau$ and random pinning point $Z$ is none other than that of the L\'evy bridge with length $r$ and pinning point $z$. For this, following the approach given in \cite{FPY} for the construction of Markov bridges with deterministic length $r$ and deterministic pinning point $z$, we will need to assume the existence of transition probability densities of the L\'evy process. Once the construction is completed, our primary objective is to examine the Markov property of the resulting bridge. We show that the pinning point has a crucial impact on the memorylessness and the dynamic of the bridge process. Specifically, from the well-known Lebesgue's decomposition theorem, we see that the law $\mathbb{P}_{Z}$ of $Z$ can be decomposed as
	\begin{equation*}
		\mathbb{P}_{Z}=a_{sd}\mathbb{P}_{Z}^{sd}+a_{sc}\mathbb{P}_{Z}^{sc}+a_{ac}\mathbb{P}_{Z}^{ac}
	\end{equation*}
	where $a_{sd}$, $a_{sc}$, and $a_{ac}$ are three positive real numbers such that $a_{sd}+a_{sc}+a_{ac}=1$, and $\mathbb{P}_{Z}^{sd}$ is discrete (has a countable support), $\mathbb{P}_{Z}^{sc}$ is singular-continuous (has a distribution function that is continuous but not absolutely continuous, it only increases on a set of Lebesgue measure $0$), and  $\mathbb{P}_{Z}^{ac}$ is absolutely continuous (there exists a positive function $f_Z(t)$ such that $\mathbb{P}_{Z}^{ac}(A)=\int_{A} f_Z(s) \mathrm{d} s$ for all $A \in \mathcal{B}(\mathbb{R})$). We demonstrate that if $a_{ac}=0$, then the L\'evy bridge with random length $\tau$ and random pinning point $Z$ is Markovian. However, the presence of the absolutely continuous part in the law of $Z$ destroys the Markov property. Precisely, if $a_{ac}\neq 0$, then there exists times $\tau$ such that the L\'evy bridge with random length $\tau$ and random pinning point $Z$ does not possess the Markov property. Let us delve into the intuitive explanation of how the breakdown of the Markov property in the bridge process with length $\tau$ and pinning point $Z$ is influenced by the absolute continuity of the pinning point $Z$. The random time $\tau$ represents a significant event where the process suddenly exhibits a shift in behavior. Precisely, at time $\tau$, the process transitions to being governed solely by the random variable $Z$. Consequently, the occurrence of $\tau$ marks a crucial point in the process where the Markov property may be tested. The Markov property states that the future behaviour of a stochastic process depends only on its present state, not on the sequence of events that preceded it. Therefore, if the random time $\tau$ can be determined solely based on the present state of the process $\zeta_t$, then the Markov property may be preserved. However, if the occurrence of $\tau$ depends on additional information beyond the present state $\zeta_t$, such as the entire history leading up to $\tau$, then the Markov property may be violated. In this case, the future behavior of the process after $\tau$ becomes dependent on the past history, indicating a breakdown of memorylessness and the Markov property. In the discrete case, where $Z$ has a discrete distribution, determining the occurrence of $\tau$ might be straightforward because the values of $Z$ are distinct and separated, allowing for clear identification of when the process transition $Z$ happens. However, in the case where $Z$ follows an absolutely continuous distribution, the values of $Z$ form a continuum. This smooth variation can make it challenging to identify precisely when the transition occurs based solely on the information available at time $\tau$. This has been confirmed by Proposition \ref{propmeasurablity} in which we have shown that the event $\{\tau\leq t \}$ is measurable with respect to the completed filtration generated by $\zeta_t$ if and only if the law of the pinning point $Z$ does not possess an absolutely continuous part.
	
	The current paper extends the results of the paper \cite{L} in two different ways, firstly in \cite{L} the Markov property is investigated for the Brownian case only for discrete or absolutely continuous pinning point, nothing has been said about the singular continuous case, secondly the proof in \cite{L} relies on the fact that the bridge admits an explicit representation in the Brownian case. This extends also \cite{EHL(Levy)} in which the L\'evy bridge Markov property is studied in the case when the pinning point is deterministic. Moreover, in this paper, different methods have been used. 
	
	In Section 2, we introduce L\'evy bridges with random length and pinning point, we list some examples, and we investigate the bridge Markov property in function of the presence of the absolutely continuous part in the pinning point law. 
	
	The following notations will be used throughout the paper: 
	For a complete probability space $(\Omega,\mathcal{F},\mathbb{P})$, $\mathcal{N}_{\mathbb{P}}$ denotes the
	collection of $\mathbb{P}$-null sets. If $\theta$ is a random variable, then $\mathbb{P}_{\theta}$ denotes the law of $\theta$ under $\mathbb{P}$. If $E$ is a topological space, then the Borel $\sigma$-algebra over $E$ will be denoted by $\mathcal{B}(E)$. The characteristic function of a set $A$ is written $\mathbb{I}_{A}$. 
	The symmetric difference of two sets $A$ and $B$ is denoted by $A\Delta B$. We denote by $(\mathbb{D}_{\infty},\mathcal{T}_{\mathbb{D}_{\infty}})$ is the Skorohod space equipped with the Skorohod topology and $\mathcal{D}$
	denotes its Borel $\sigma$-field. Finally, for any process $Y=(Y_t,\, t\geq 0)$ on $(\Omega,\mathcal{F},\mathbb{P})$, we define by:
	\begin{equation*}
		\mathbb{F}^{Y}=\bigg(\mathcal{F}^{Y}_t:=\sigma(Y_s, s\leq t),~ t\geq 0\bigg) \text{  the natural filtration of the process  }Y.
	\end{equation*}
	\section{L\'evy Bridges with Random Length and Pinning Point}\label{sectionstoppingtimeproperty}
	Motivated by the desire to generalize L\'evy bridges, we introduce the notion of L\'evy bridges with random length and random pinning point. These bridges extend the classical concept of L\'evy bridges by allowing for uncertainty in both the length of the bridge and the location at which it is pinned. This generalization extends remarkably the possibility for the modelling of information flows. In this section, our objective is to define L\'evy bridges with random length and pinning point and investigate their Markov property. A significant finding is that the Markov property of this class of processes depends on the nature of the law of the pinning point of the bridge process. Before introducing this novel class of processes, we will provide a brief review of the definition and properties of L\'evy bridges with deterministic length and pinning point. L\'evy processes are characterized by the triple $(b, \sigma, \nu)$, where $b$ represents the drift component, $\sigma$ describes the covariance structure of the Gaussian part, and $\nu$ denotes the jump intensity or rate at which jumps occur. This triple uniquely determines the behavior and properties of the L\'evy process. Let $X=(X_t, t\geq 0)$ be a one-dimensional L\'evy process, the law of $X_t$ is specified via its characteristic function given by
	$$ \mathbb{E}[\exp(i \lambda X_t)]=\exp(t\psi(\lambda)), \,\,\lambda \in \mathbb{R}, t\geq 0. $$
	The function $\psi$ is commonly referred to as the characteristic exponent of the process $X$. The explicit expression of the characteristic exponent is derived using the well-known L\'evy-Khintchine formula:
	\begin{equation}
		\psi(\lambda)=i\lambda b-\dfrac{\lambda^2 \sigma}{2}+\int_{\mathbb{R}}\left(\exp(i\lambda x)-1-i\lambda x \mathbb{I}_{\{\vert x\vert<1\}}\right)\nu(\mathrm{d}x), \,\,\label{eqLevykinchen}
	\end{equation} 
	where $b \in \mathbb{R},$ $\sigma \in \mathbb{R}_+$ and $\nu$ is a measure concentrated on $\mathbb{R}\backslash\{0\}$, called the L\'evy measure, satisfying $$\int_{\mathbb{R}}(x^2\wedge 1)\nu(\mathrm{d}x) < \infty.$$
	Further details about L\'evy processes can be found in \cite{B} and \cite{Sato}. The Kolmogorov-Daniell theorem allows us to see that the finite-dimensional
	distributions of $X$ induce a probability measure $\mathbb{P}_X$ on the Skorohod space $\mathbb{D}_{\infty}$. Furthermore, it is well-known in the literature that every L\'evy process can be precisely represented as the coordinate process on the Skorohod space $\mathbb{D}_{\infty}$. This realization allows us to treat each L\'evy process as a probability measure on $\mathbb{D}_{\infty}$ and vice versa.
	Our main assumption is the following:
	\begin{hy}\label{hydensityexistence}
		For all $t>0$, the probability law $\mathbb{P}_{X_{t}}(\mathrm{d}x)$ is absolutely continuous with respect to Lebesgue measure, with a density function $f^X_t$.
	\end{hy}
	Numerous studies have been dedicated to investigating the conditions that guarantee the absolute continuity of the probability law $\mathbb{P}_{X_{t}}(\mathrm{d}x)$ with respect to Lebesgue measure. For instance, if the Gaussian component $\sigma$ does not vanish, or if the characteristic function of $X_t$ is integrable for every $t>0$, then the law $\mathbb{P}_{X_{t}}(\mathrm{d}x)$ admits a density function with respect to Lebesgue measure, for every $t>0$. See Sato \cite{Sato}, and Sharpe \cite{Sh}. See also Tucker \cite{T} and Hartman and Wintner \cite{HW} for more conditions that ensure the existence of a density function.\\
Let $r>0$ and $z\in \mathbb{R}$ such that $0<f_r^X(z)<+\infty$, under Assumption \ref{hydensityexistence}, it follows from \cite[Proposition 1]{FPY} that there exists a unique probability measure $\mathbb{P}^{r,z}$ such that
\begin{equation}
	\mathbb{P}^{r,z}\vert_{\mathcal{F}^{X}_{t}}=\dfrac{f^X_{r-t}(z-X_{t})}{f^X_{r}(z)}\,\,\mathbb{P}\vert_{\mathcal{F}^{X}_{t}}\label{eqbridgelawQrz}
\end{equation}
for all $t<r$. Moreover, under $\mathbb{P}^{r,z}$, the process $(X_{t}, 0\leq t<r)$
is a non-homogeneous Markov process with transition densities given
by 
\begin{equation}
	\mathbb{P}^{r,z}\left(X_{t}\in \hbox{d}y\vert\mathcal{F}^{X}_{s}\right)=\mathbb{P}^{r,z}\left(X_{t}\in \mathrm{d}y\vert X_{s}\right)=\dfrac{f^X_{t-s}(y-X_{s})\,f^X_{r-t}(z-y)}{f^X_{r-s}(z-X_{s})}\,\mathrm{d}y,\,s<t<r.\label{eq:f-transition densities}
\end{equation}
Furthermore, $(\mathbb{P}^{r,z})_{z\in \mathbb{R}}$ is a regular version of the family of  conditional probability distributions $\mathbb{P}(.\vert X_r=z)$, $z\in \mathbb{R}$. This implies that $\mathbb{P}^{r,z}$ is the law of the bridge from $0$ to $z$ with deterministic length $r$ associated with the L\'evy process $X$. Let $\left(X^{r,z},t\leq r\right)$ be the process associated with the probability law $\mathbb{P}^{r,z}$. The process constructed $X^{r,z}$ can be realized as the coordinate process on the Skorohod space $\mathbb{D}_{r}$ of c\`adl\`ag functions
from $[0,r]$ to $\mathbb{R}$. Furthermore, it satisfies that $\mathbb{P}(X_{0}^{r,z}=0,X_{r}^{r,z}=z)=1$. Consequently, the finite-dimensional densities
of $X^{r,z}$ exist and, given $x_{0}=t_{0}=0$,
for every $n\in\mathbb{N}$, $0<t_{1}<t_{2}<...<t_{n}<r$, and $(x_{1},x_{2},...,x_{n})\in\mathbb{R}^{n}$,
we have 
\begin{align}
	\mathbb{P}(X_{t_{1}}^{r,z}\in \hbox{d}x_{1},\ldots,X_{t_{n}}^{r,z}\in \hbox{d}x_{n}) & =\dfrac{f^X_{r-t_{n}}(z-x_{n})}{f^X_{r}(z)}\prod_{i=1}^{n}f^X_{t_{i}-t_{i-1}}(x_{i}-x_{i-1})\,\mathrm{d}x_{1}\ldots \mathrm{d}x_{n}.\label{eqfinitedimensionaldensities}
\end{align}
Throughout the subsequent analysis, we extend the process $X^{r,z}$ beyond time $r$ by assigning it the constant value $z$. That is, we identify the process $ \left( X_t^{r,z}, t\geq 0 \right)$ with the process $\tilde{X}^{r,z}:=\left(X_t^{r,z}\mathbb{I}_{\{t<r\}}+z\mathbb{I}_{\{t\geq r\}},\, t\geq 0\right)$. At this point, we are ready to introduce L\'evy bridges with random length and random pinning point associated with the L\'evy process $X$.
\begin{definition}\label{defzeta^z}
	Let $\tau$ and $Z$ be two independent random variables taking values in $(0,+\infty)$ and $\mathbb{R}$, respectively. We say that a process $\zeta$ is the L\'evy bridge with random length $\tau$ and pinning point $Z$ derived from the L\'evy process $X$ if the following are satisfied:
	\begin{enumerate}
		\item[(i)] $0<f^X_r(z)<\infty$ for $\mathbb{P}_{(\tau,Z)}$ almost every $(r,z)$.
		\item[(ii)] The conditional distribution of $\zeta$ given $\left\{ \tau=r, Z=z\right\} $ is the law of the process $X^{r,z}$.
	\end{enumerate}
\end{definition}
\begin{remark}\label{remarkzeta^z}
	Following Definition 3.1 in \cite{EHL(Levy)}, given $Z=z$, $\zeta$ is none other that the L\'evy process pinned in $z$ at the random time $\tau$, or equivalently, the L\'evy bridge with pinning point $z$ and random length $\tau$. Indeed, for all $n\in \mathbb{N}$, $0 < t_1 < t_2 <\ldots < t_n$, and every bounded measurable function defined on $\mathbb{R}^n$, we have
	\begin{align}
		\mathbb{E}[g(\zeta_{t_1},\ldots,\zeta_{t_n})\vert Z=z]&=\displaystyle\int_{0}^{+\infty}\mathbb{E}[g(\zeta_{t_1},\ldots,\zeta_{t_n})\vert Z=z,\tau=r]\mathbb{P}_{\tau\vert Z=z}(\mathrm{d}r)\nonumber\\
		&=\displaystyle\int_{0}^{+\infty}\mathbb{E}[g(X^{r,z}_{t_1},\ldots,X^{r,z}_{t_n})]\mathbb{P}_{\tau}(\mathrm{d}r).
	\end{align}
	Denoting by $\zeta^{\tau,z}$ the bridge with pinning point $z$ and random length $\tau$ associated with $X$, we also have
	\begin{align}
		\mathbb{E}[g(\zeta_{t_1}^{\tau,z},\ldots,\zeta_{t_n}^{\tau,z})]&=\displaystyle\int_{0}^{+\infty}\mathbb{E}[g(\zeta_{t_1}^{\tau,z},\ldots,\zeta_{t_n}^{\tau,z})\vert \tau=r]\mathbb{P}_{\tau}(\mathrm{d}r)\nonumber\\
		&=\displaystyle\int_{0}^{+\infty}\mathbb{E}[g(X^{r,z}_{t_1},\ldots,X^{r,z}_{t_n})]\mathbb{P}_{\tau}(\mathrm{d}r).
	\end{align}	
\end{remark}
\begin{remark}
	The process $\zeta$ can be realized as follows : Consider the probability space $\left(\tilde{\Omega},\tilde{\mathcal{F}},\tilde{\mathbb{P}}\right)$
	\[
	\tilde{\Omega}=\mathbb{D}_{\infty}\times(0,+\infty)\times\mathbb{R},\quad\tilde{\mathcal{F}}=\mathcal{F}\otimes\mathcal{B}\left((0,+\infty)\right)\otimes\mathcal{B}(\mathbb{R})\quad\tilde{\mathbb{P}}\left(\hbox{d}w,\mathrm{d}r,\mathrm{d}z\right)=\mathbb{P}_{X^{r,z}}(\hbox{d}w)\,\mathbb{P}_{(\tau, Z)}(\hbox{d}r,\mathrm{d}z).
	\]
	
	We write $\tilde{w}=(w,r,z)$ for a generic point of $\tilde{\Omega}$.
	Now define 
	\[
	\tilde{\tau}(\tilde{w})=r\text{,}\quad\tilde{Z}(\tilde{w})=z\quad\text{and \quad}\tilde{\zeta}_{t}(\tilde{w})=X_{t}^{r,z}(w),\quad t\geq0.
	\]
	Thus, we have $$ \tilde{\mathbb{P}}\left(\tilde{\tau}\leq t \right) =\mathbb{P}(\tau \leq t),\,\, t\geq 0,$$
	$$ \tilde{\mathbb{P}}\left(\tilde{Z}\leq a \right) =\mathbb{P}(Z \leq a),\,\, a\in \mathbb{R},$$ and the conditional distribution of $\tilde{\zeta}$ given $\left\{ \tilde{\tau}=r, \tilde{Z}=z\right\} $
	is the law $\mathbb{P}_{X^{r,z}}$. Moreover, the process thus defined has c\`adl\`ag paths and  satisfies $\tilde{\zeta}_{t}=\tilde{Z}$ when $\tilde{\tau} \leq t$ and $\tilde{\zeta}_{0}=0$.
\end{remark}
\subsection{Examples}We provide explicit representations of L\'evy bridges with random length and pinning point in the following examples. 
\subsubsection{Brownian motion} Let $(r, z)\in (0, +\infty)\times\mathbb{R}$ and $B$ be a Brownian motion, it is obvious that the Brownian bridge with length $r$ and pining point $z$ can be represented as:
\begin{equation}
	B_t^{r,z}=B_{t\wedge r}-\dfrac{t\wedge r}{r}B_{r}+\dfrac{t\wedge r}{r}\,z,\,\,\,t\geq 0.
\end{equation}			
For any strictly positive random time $\tau$ and any random variable $Z$ such that $\tau$, $Z$, and $B$ are independent, the process given by 
\begin{equation}
	\zeta_t=B_{t\wedge \tau}-\dfrac{t\wedge \tau}{\tau} B_{\tau}+\dfrac{t\wedge \tau}{\tau}\,Z,\,\,\,t\geq 0,
\end{equation}
is the bridge with random length $\tau$ and pinning point $Z$ associated with the Brownian motion $B$. For a fuller treatment we refer the reader to \cite{L}.
\subsubsection{Gamma process}\label{subsectionGamma} Let $\gamma$ be a gamma process, $\tau$ and  $Z$ be two random variables taking values in $(0,+\infty)$ independent of $\gamma$. The process given by
\begin{equation}
	\zeta_t=Z\,\dfrac{\gamma_{t\wedge \tau}}{\gamma_{\tau}},\,\,\,t\geq 0.
\end{equation}
is the bridge with random length $\tau$ and pinning point $Z$. See \cite{EHL} for the case in which the pinning point is deterministic.
\subsubsection{Symmetric $\alpha$-stable process} Let $X^{\alpha}$ be a symmetric $\alpha$-stable processes with $\alpha \in (0,1)\cup(1,2)$, that is, a L{\'e}vy process with L{\'e}vy measure given by
\[
\mathrm{d}\nu^{\alpha}(x)=\left(\frac{1}{\left|x\right|^{1+\alpha}}\mathbb{I}_{\{x<0\}}+
\frac{1}{x^{1+\alpha}}\mathbb{I}_{\{x>0\}}\right)\mathrm{d}x.\]
Its characteristic exponent $\varPsi_{X^{\alpha}}$ has the form
\[
\varPsi_{X^{\alpha}}(u)=-|u|^{\alpha},~ u\in\mathbb{R}.
\]
Hence, it is clear that its characteristic function is integrable. According to \cite{Sh}, it is evident that for all $t>0$ the law of $X^{\alpha}_t$ is absolutely continuous with respect to Lebesgue measure. Furthermore, the density function takes the following form
\begin{equation}
	f^{\alpha}_t(x)= (2\pi)^{-1}\dint_{\mathbb{R}}\exp(ixy)\exp(-t\vert y\vert^{\alpha})\,\mathrm{d}y, \,\, x\in \mathbb{R}.\label{eqfrepresentation}
\end{equation}
It is important to note that for every $x\in \mathbb{R}$ and $t>0$, we have
\begin{equation}
	0<f^{\alpha}_t(x)<+\infty.
\end{equation}
Indeed, if there exists certain $t$ and $x$ such that $f^{\alpha}_t(x)=0$, then there exists a constant $c$ such that $X^{\alpha}_t-ct$ corresponds to either a subordinator or the negation of a subordinator (refer to \cite{Sh}). This contradicts the symmetry of the process $X^{\alpha}$. On the other hand, it is easy to see that
\begin{equation}
	f^{\alpha}_t(x)\leq \pi^{-1}\dint_{0}^{+\infty}\exp(-t y^{\alpha})\,\mathrm{d}y<+\infty.\label{eqfrepresentation}
\end{equation}
Let $(\tau, Z)$ be two random variables taking values in $(0,+\infty)\times\mathbb{R}\setminus\{0\}$ such that $\tau$ and $Z$ are independent of $X^{\alpha}$. We define the following process:
\begin{equation}
	{\displaystyle \zeta_{t}=\begin{cases}
			\dfrac{\tau^{1/\alpha}}{\mathfrak{g}_{Z\,\tau^{1/\alpha}}^{1/\alpha}}X_{t(\mathfrak{g}_{Z\,\tau^{1/\alpha}})/\tau}^{\alpha} & \text{if}\,\,t<\tau\text{,}\\
			Z & \text{if }\,\,t\geq\tau,
	\end{cases}}
\end{equation}
where, for a real-valued random variable $C$, 
\begin{equation}
	{\displaystyle \mathfrak{g}_{C}=\begin{cases}
			0,\quad\quad  \text{if}\,\,\{t\leq1:X_{t-}^{\alpha}=C\,t^{1/\alpha}\}=\emptyset,\\
			\sup\{t\leq1:X_{t-}^{\alpha}=C\,t^{1/\alpha}\}, \text{otherwise, }
	\end{cases}}\label{eq:last passage}
\end{equation}
The process $\zeta$ is well defined. Indeed, we have 
\begin{align*}
	\mathbb{P}\left(\mathfrak{g}_{Z\,\tau^{1/\alpha}}>0\right)&=\displaystyle\int_{0}^{+\infty}\displaystyle\int_{\mathbb{R}\setminus\{0\}}\mathbb{P}\left(\mathfrak{g}_{Z\,\tau^{1/\alpha}}>0\vert\tau=r,Z=z\right)\mathbb{P}_{(\tau,Z)}(\mathrm{d}r,\mathrm{d}z)\\
	&=\displaystyle\int_{0}^{+\infty}\displaystyle\int_{\mathbb{R}\setminus\{0\}}\mathbb{P}\left(\mathfrak{g}_{z\,r^{1/\alpha}}>0\right)\mathbb{P}_{(\tau,Z)}(\mathrm{d}r,\mathrm{d}z)\\
	&=1.
\end{align*}
where the latter equality holds because for any non-zero real number $c$ we have $\mathbb{P}\left(\mathfrak{g}_{c}>0\right)=1$, (see \cite[Theorem 6]{CUB}), and therefore, we obtain that for any $(r,z)\in (0, +\infty)\times\mathbb{R}\setminus\{0\}$, $\mathbb{P}\left(\mathfrak{g}_{z\,r^{1/\alpha}}>0\right)=1$. Using, Theorem 3 in \cite{CUB} and the fact that $\tau$ and $Z$ are independent of $X^{\alpha}$ we show that $\zeta$ is the bridge with length $\tau$ and pinning point $Z$ associated with $X^{\alpha}$.
\subsection{Certain properties of the random time $\tau$}	 
In this subsection, we investigate certain properties of the random time $\tau$ in terms of the nature of the law of the pinning point $Z$.
\begin{proposition}\label{propstop}
	For all $t>0$, we have 
	\begin{equation}\label{stpprop}
		\mathbb{P}\left(\{\zeta_t = Z\} \bigtriangleup \{\tau \leq t\}\right)=0.
	\end{equation}
\end{proposition}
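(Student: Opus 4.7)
The plan is to prove the two inclusions (modulo null sets) separately: $\{\tau \leq t\} \subseteq \{\zeta_t = Z\}$ up to a $\mathbb{P}$-null set, and $\{\zeta_t = Z\} \cap \{\tau > t\}$ is $\mathbb{P}$-null. The first inclusion is immediate from the construction of $\zeta$: by the extension convention we adopted just before Definition \ref{defzeta^z}, the process $X^{r,z}$ is identified with the process equal to $z$ for $t\geq r$. Realizing $\zeta$ on the canonical space $(\tilde\Omega,\tilde{\mathcal F},\tilde{\mathbb{P}})$, this yields $\tilde\zeta_t = \tilde Z$ on $\{\tilde\tau\leq t\}$, which transfers back to $\mathbb{P}\bigl(\{\tau\leq t\}\setminus\{\zeta_t=Z\}\bigr)=0$.

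The real content is the reverse inclusion. I would compute, by conditioning on the independent pair $(\tau,Z)$,
\begin{equation*}
\mathbb{P}\bigl(\zeta_t=Z,\,\tau>t\bigr) = \int_{(t,\infty)}\!\!\int_{\mathbb{R}} \mathbb{P}\bigl(\zeta_t=z\,\big|\,\tau=r,\,Z=z\bigr)\,\mathbb{P}_{(\tau,Z)}(\mathrm{d}r,\mathrm{d}z).
\end{equation*}
By item (ii) of Definition \ref{defzeta^z}, the inner probability equals $\mathbb{P}\bigl(X^{r,z}_t=z\bigr)$. Since $\mathbb{P}_{(\tau,Z)}$-a.e. $(r,z)$ satisfies $0<f^X_r(z)<\infty$, and since on $\{r>t\}$ the value $t$ is strictly less than $r$, formula (\ref{eqfinitedimensionaldensities}) applied with $n=1$ shows that the law of $X^{r,z}_t$ is absolutely continuous with respect to Lebesgue measure, with density
\begin{equation*}
y \longmapsto \dfrac{f^X_t(y)\,f^X_{r-t}(z-y)}{f^X_r(z)}.
\end{equation*}
In particular, $\mathbb{P}\bigl(X^{r,z}_t=z\bigr)=0$ for $\mathbb{P}_{(\tau,Z)}$-almost every $(r,z)\in(t,\infty)\times\mathbb{R}$, so the above double integral vanishes.

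Combining the two steps gives the symmetric-difference estimate (\ref{stpprop}). The only minor subtlety I anticipate is making sure that the conditional-expectation identity used in the second step is literally legitimate: this is handled cleanly by working on the canonical realization $(\tilde\Omega,\tilde{\mathcal F},\tilde{\mathbb{P}})$ of the remark preceding Proposition \ref{propstop}, where $\tilde\zeta$, $\tilde\tau$, $\tilde Z$ are built as coordinate maps and the conditional law of $\tilde\zeta$ given $(\tilde\tau,\tilde Z)=(r,z)$ is, by construction, exactly $\mathbb{P}_{X^{r,z}}$. No part of the argument appears genuinely difficult; the only thing that must be checked carefully is the applicability of Assumption \ref{hydensityexistence} (and the positivity of $f^X_r(z)$) on the region of integration, which is precisely what item (i) of Definition \ref{defzeta^z} guarantees.
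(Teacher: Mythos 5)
Your proof is correct and follows essentially the same route as the paper's: split the symmetric difference into the two events $\{\zeta_t=Z,\,\tau>t\}$ and $\{\zeta_t\neq Z,\,\tau\leq t\}$, condition on $(\tau,Z)$ via Definition \ref{defzeta^z}(ii), and use the absolute continuity of $X_t^{r,z}$ for $t<r$ together with the convention $X_t^{r,z}=z$ for $t\geq r$ to see that each piece vanishes. The extra care you take about the canonical realization and the explicit invocation of \eqref{eqfinitedimensionaldensities} only make precise what the paper compresses into a single line.
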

\begin{proof}
	Using the assertion (ii) of Definition \ref{defzeta^z}, the fact that $X_t^{r,z}$ is absolutely continuous with respect to Lebesgue measure for all $r>t>0$, along with the observation that $X_t^{r,z}=z$ when $t\geq r$, we obtain
	\begin{align*}
		&\mathbb{P}\left(\{\zeta_t = Z\} \bigtriangleup \{\tau \leq t\}\right)=\mathbb{P}(\zeta_{t}=Z,t<\tau)+\mathbb{P}(\zeta_{t}\neq Z,\tau\leq t)\\
		&=\dint_{t}^{+\infty}\dint_{\mathbb{R}} \mathbb{P}(\zeta_{t}=Z \vert \tau=r,Z=z) \mathbb{P}_{(\tau,Z)}(\mathrm{d}r,\mathrm{d}z)+\dint_{0}^{t}\dint_{\mathbb{R}} \mathbb{P}(\zeta_{t}\neq Z \vert \tau=r,Z=z)\mathbb{P}_{(\tau,Z)}(\mathrm{d}r,\mathrm{d}z)
		\\  
		&=\dint_{t}^{+\infty}\dint_{\mathbb{R}} \mathbb{P}(X_{t}^{r,z}=z)\mathbb{P}_{(\tau,Z)}(\mathrm{d}r,\mathrm{d}z)+\dint_{0}^{t}\dint_{\mathbb{R}} \mathbb{P}(z\neq z)\mathbb{P}_{(\tau,Z)}(\mathrm{d}r,\mathrm{d}z)=0.
	\end{align*}
	This completes the proof.
\end{proof}
\begin{remark}
	\begin{enumerate}
		\item[(i)] The process $\left(\mathbb{I}_{\{\tau \leq t\}}, t>0\right)$ is a modification, under the probability measure $\mathbb{P}$, of the process $\left(\mathbb{I}_{\left\{\zeta_t=Z\right\}}, t>0\right)$.
		\item[(ii)] The random time $\tau$ is an $\mathbb{F}^{\zeta}\vee \sigma(Z)\vee \mathcal{N}_{\mathbb{P}}$-stopping time.
	\end{enumerate}
\end{remark}
From the well-known Lebesgue's decomposition theorem, we see that the law $\mathbb{P}_{Z}$ of $Z$ can be decomposed as
\begin{equation}\label{eqP_Zdecom1}
	\mathbb{P}_{Z}=(1-a_{ac})\mathbb{P}_{Z}^{s}+a_{ac}\mathbb{P}_{Z}^{ac}
\end{equation}
where $a_{ac}$ is a non-negative real number less than or equal $1$, $\mathbb{P}_{Z}^{ac}$ is absolutely continuous with respect to the Lebesgue measure (there exists a positive function $f_Z(t)$ such that $\mathbb{P}_{Z}^{ac}(A)=\int_{A} f_Z(s) \mathrm{d} s$ for all $A \in \mathcal{B}(\mathbb{R})$), and $\mathbb{P}_{Z}^{s}$ is singular with respect to the Lebesgue measure. Hence, there exists a set $\mathscr{Z}\subset \mathbb{R}$ such that $\mathbb{P}_{Z}^{ac}(\mathscr{Z})=0$ and $\mathbb{P}_{Z}^{s}(\mathscr{Z})=1$.\\
The singular part can be also decomposed into a discrete part $\mathbb{P}_{Z}^{sd}$ (pure point part), and a singular continuous part $\mathbb{P}_{Z}^{sc}$ (has a distribution function that is continuous but not absolutely continuous, it only increases on a set of Lebesgue measure $0$), that is,
\begin{equation}
	\mathbb{P}_{Z}=a_{sd}\mathbb{P}_{Z}^{sd}+a_{sc}\mathbb{P}_{Z}^{sc}+a_{ac}\mathbb{P}_{Z}^{ac}
\end{equation}
where $a_{sd}$, and $a_{sc}$ are two non-negative real numbers such that $a_{sd}+a_{sc}+a_{ac}=1$. We have the following result:
\begin{proposition}\label{propmeasurablity}
	Let $\zeta$ be a L\'evy bridge with length $\tau$ and pinning point $Z$, the event $\{\tau\leq t \}\in \sigma(\zeta_t)\vee \mathcal{N}_{\mathbb{P}}$ for all $t$ if and only if $a_{ac}=0$.
\end{proposition}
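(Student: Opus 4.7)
The proof splits into the two implications of the iff.

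For the direction $a_{ac}=0\Rightarrow\{\tau\le t\}\in\sigma(\zeta_t)\vee\mathcal{N}_\mathbb{P}$ for all $t$, observe that $\mathbb{P}_Z$ is then singular with respect to Lebesgue measure, hence concentrated on the Borel set $\mathscr{Z}\subset\mathbb{R}$ of Lebesgue measure zero introduced above the proposition. I will show that $\{\tau\le t\}$ coincides, up to a $\mathbb{P}$-null set, with the $\sigma(\zeta_t)$-measurable event $\{\zeta_t\in\mathscr{Z}\}$. By Proposition \ref{propstop}, on $\{\tau\le t\}$ we have $\zeta_t=Z\in\mathscr{Z}$ $\mathbb{P}$-a.s.; conversely, Definition \ref{defzeta^z}(ii) together with the transition density \eqref{eq:f-transition densities} ensures that the law of $\zeta_t$ conditionally on $(\tau,Z)=(r,z)$ with $r>t$ is Lebesgue-absolutely continuous, so $\mathbb{P}(\zeta_t\in\mathscr{Z},\tau>t)=0$ since $|\mathscr{Z}|=0$.

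For the converse I would argue by contrapositive: assume $a_{ac}>0$ and exhibit a $t$ for which $\{\tau\le t\}\notin\sigma(\zeta_t)\vee\mathcal{N}_\mathbb{P}$. Fix any $t$ with $\mathbb{P}(\tau\le t)\mathbb{P}(\tau>t)>0$ and suppose, for contradiction, that $\mathbb{P}(\{\zeta_t\in B\}\,\Delta\,\{\tau\le t\})=0$ for some Borel $B\subset\mathbb{R}$. By Proposition \ref{propstop} and the independence of $\tau$ and $Z$, the identity $\zeta_t=Z$ on $\{\tau\le t\}$ forces $\mathbb{P}_Z(B)=1$. Decomposing $\mathbb{P}_Z=(1-a_{ac})\mathbb{P}_Z^{s}+a_{ac}\mathbb{P}_Z^{ac}$ and using $a_{ac}>0$, this yields $\mathbb{P}_Z^{ac}(B)=1$, equivalently $F\subset B$ Lebesgue-a.e., where $F:=\{f_Z>0\}$ has positive Lebesgue measure. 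Combined with $\mathbb{P}(\zeta_t\in B,\tau>t)=0$, this gives $\mathbb{P}(\zeta_t\in F,\tau>t)=0$.

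To reach a contradiction, I would establish the strict positivity $\mathbb{P}(\zeta_t\in F,\tau>t)>0$. Unfolding via Definition \ref{defzeta^z}(ii) and the transition density \eqref{eq:f-transition densities},
\[
\mathbb{P}(\zeta_t\in F,\tau>t)\ge a_{ac}\int_{(t,+\infty)}\!\!\int_F f_Z(z)\int_F\frac{f^X_t(x)\,f^X_{r-t}(z-x)}{f^X_r(z)}\,dx\,dz\,\mathbb{P}_\tau(dr),
\]
so it suffices to produce a $\mathbb{P}_\tau\otimes f_Z\,dz$-positive set of pairs $(r,z)\in(t,+\infty)\times F$ with $\mathbb{P}(X^{r,z}_t\in F)>0$. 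The driving heuristic is that at a Lebesgue density point $z$ of $F$ the bridge density $f^X_t(x)f^X_{r-t}(z-x)/f^X_r(z)$ concentrates around $x=z$ as $r\downarrow t$, by the continuity in probability of $X$ at $0$ together with the convolution identity $f^X_r=f^X_t*f^X_{r-t}$, which forces $\mathbb{P}(X^{r,z}_t\in F)\to 1$. The main obstacle is the discrete case in which $r$ cannot be sent to $t$ from above within $\mathrm{supp}(\mathbb{P}_\tau)$; here $t$ must be chosen appropriately so that at some atom $r_0>t$ of $\mathbb{P}_\tau$ a direct inspection of the bridge density together with Assumption \ref{hydensityexistence} yields $\mathbb{P}(X^{r_0,z}_t\in F)>0$ on a Lebesgue-positive subset of $F$.
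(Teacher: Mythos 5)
Your forward direction ($a_{ac}=0\Rightarrow$ measurability) is exactly the paper's argument: $\{\tau\le t\}$ is a.s.\ equal to $\{\zeta_t\in\mathscr{Z}\}$, using Proposition \ref{propstop} and the absolute continuity of $X^{r,z}_t$ for $t<r$.

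For the converse you take a genuinely different route. The paper computes $\mathbb{E}[\mathbb{I}_{\{\tau\le t\}}\mid\zeta_t]$ explicitly via Bayes' theorem (writing the conditional law of $\zeta_t$ given $\tau$ as a density $q_t(r,x)$ with respect to $\mathbb{P}^s_Z(\mathrm{d}x)+\mathrm{d}x$), and then exhibits the concrete event $\mathfrak{N}_t\subset\{\tau\le t\}$ of probability $a_{ac}F_\tau(t)>0$ on which this conditional expectation is strictly less than $1$; that immediately precludes $\{\tau\le t\}\in\sigma(\zeta_t)\vee\mathcal{N}_\mathbb{P}$. You instead suppose a representing Borel set $B$, deduce $\mathbb{P}_Z(B)=1$ hence $F:=\{f_Z>0\}\subset B$ a.e., and aim to contradict $\mathbb{P}(\zeta_t\in F,\tau>t)=0$. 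This is a clean and valid reduction, and in fact, after a Fubini, the positivity you need, namely $\int_F f^X_t(x)\,\Phi(x)\,\mathrm{d}x>0$ with $\Phi(x)=\int_t^{+\infty}\int\frac{f^X_{r-t}(z-x)}{f^X_r(z)}\mathbb{P}_Z(\mathrm{d}z)\mathbb{P}_\tau(\mathrm{d}r)$, is essentially the same positivity the paper invokes for the denominator in its Bayes quotient on the event $\{\tau\le t,\ Z\in\mathscr{Z}^c,\ f_Z(Z)>0\}$.

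The genuine gap is that you do not actually prove this positivity. Your density-point heuristic (bridge density concentrating near $z$ as $r\downarrow t$) only applies when $\operatorname{supp}(\mathbb{P}_\tau)$ accumulates at $t$ from above, and you explicitly acknowledge but do not resolve the case where $\tau$ is discrete and the first atom above $t$ is bounded away from $t$. Since the admissible $t$'s are constrained by $0<\mathbb{P}(\tau\le t)<1$, you cannot in general arrange for $\mathbb{P}_\tau$-mass near $t^+$, so this is not a matter of ``choosing $t$ appropriately''; one needs an argument valid at a fixed atom $r_0>t$, and ``direct inspection of the bridge density together with Assumption \ref{hydensityexistence}'' does not deliver this, because Assumption \ref{hydensityexistence} guarantees only existence of the densities $f^X_s$, not any lower bound or support information on $\int_F f^X_t(x)f^X_{r_0-t}(z-x)\,\mathrm{d}x$ for $z\in F$. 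Until that positivity is established (the paper asserts the corresponding fact without proof, as ``easy to see''), the contradiction you aim for is not reached, so the converse direction is incomplete as written.
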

\begin{proof}
	Suppose that $a_{ac}=0$, that is, the pinning point $Z$ takes values in a set $\mathscr{Z}$ of Lebesgue measure zero, then the process $\left(\mathbb{I}_{\{\tau \leq t\}}, t>0\right)$ is a modification, under the probability measure $\mathbb{P}$, of the process $\left(\mathbb{I}_{\left\{\zeta_t\in \mathscr{Z}\right\}}, t>0\right)$. Indeed, for all $t>0$, we have
	\begin{align*}
		&\mathbb{P}\left(\{\zeta_t \in \mathscr{Z}\} \bigtriangleup \{\tau \leq t\}\right)=\mathbb{P}(\zeta_{t}\in \mathscr{Z},t<\tau)+\mathbb{P}(\zeta_t\in \mathscr{Z}^c,\tau\leq t)\\
		&=\dint_{t}^{+\infty}\dint_{\mathbb{R}} \mathbb{P}(\zeta_t \in \mathscr{Z} \vert \tau=r,Z=z)\mathbb{P}_{(\tau,Z)}(\mathrm{d}r,\mathrm{d}z)+\dint_{0}^{t}\dint_{\mathbb{R}} \mathbb{P}(\zeta_t\in \mathscr{Z}^c \vert \tau=r,Z=z)\mathbb{P}_{(\tau,Z)}(\mathrm{d}r,\mathrm{d}z)
		\\  
		&=\dint_{t}^{+\infty}\dint_{\mathbb{R}} \mathbb{P}(X_{t}^{r,z}\in \mathscr{Z})\mathbb{P}_{(\tau,Z)}(\mathrm{d}r,\mathrm{d}z)+ \,\mathbb{P}(\tau\leq t,Z\in \mathscr{Z}^c)=0.
	\end{align*}
	In the latter equality, we have made use of the fact that $\mathbb{P}(Z\in \mathscr{Z}^c)=0$, the set $\mathscr{Z}$ is of Lebesgue measure zero, and that the law of $X_{t}^{r,z}$ is absolutely continuous with respect to the Lebesgue measure when $0<t<r$. This leads to the conclusion that for all $t>0$, the event $\{\tau\leq t \}\in \sigma(\zeta_t)\vee \mathcal{N}_{\mathbb{P}}$. 
	\\
	Conversely, assume that $a_{ac}\neq 0$, it suffices to prove that there exist times $t>0$ such that $\mathbb{E}[\mathbb{I}_{\{\tau \leq t\}}|\zeta_t]\neq \mathbb{I}_{\{\tau \leq t\}}$. To establish this it is necessary to determine the law of $\tau$ given $\zeta_t$. It is not difficult to show that $\mathbb{P}_{\zeta_t\vert\tau=r }$, the law of $\zeta_t$ given $\{\tau=r\}$, is absolutely continuous with respect to $\nu(\mathrm{d}x)=\mathbb{P}^{s}_{Z}(\mathrm{d}x)+\mathrm{d}x$. Moreover, its Radon-Nikodym derivative is given by 
	\begin{multline*}
		q_t(r,x)=((1-a_{ac})\mathbb{I}_{\mathscr{Z}}(x)+a_{ac}\mathbb{I}_{\mathscr{Z}^c}(x)f_Z(x))\mathbb{I}_{\{r\leq t\}}+\dint_{\mathbb{R}} \dfrac{f^X_{r-t}(z-x)f^X_t(x)}{f^X_r(z)}\mathbb{P}_{Z}(\mathrm{d}z)\mathbb{I}_{\mathscr{Z}^c}(x)\mathbb{I}_{\{t<r\}}.
	\end{multline*}
	That is, for every $B \in \mathcal{B}(\mathbb{R})$, we have
	$$ \mathbb{P}_{\zeta_t\vert\tau=r }(B)=\dint_{B}q_t(r,x)\nu(\mathrm{d}x).$$
	Thus, applying Bayes formula, it follows that for all bounded measurable functions $h$ defined on $(0,+\infty)$ we have, $\mathbb{P}$-a.s.,
	\begin{align}
		\mathbb{E}[h(\tau)&\vert\zeta_t]=\dfrac{\dint_{0}^{+\infty}h(r)q_t(r,\zeta_t)\mathbb{P}_{\tau}(\mathrm{d}r)}{\dint_{0}^{+\infty}q_t(r,\zeta_t)\mathbb{P}_{\tau}(\mathrm{d}r)}=\dfrac{f_Z(\zeta_t)\dint_{0}^{t}h(r)\mathbb{P}_{\tau}(\mathrm{d}r)}{f_Z(\zeta_t)F_{\tau}(t)}\mathbb{I}_{\mathscr{Z}}(\zeta_t)\nonumber\\
		&+\dfrac{a_{ac}f_Z(\zeta_t)\dint_{0}^{t}h(r)\mathbb{P}_{\tau}(\mathrm{d}r)+f^X_t(\zeta_t)\dint_{t}^{+\infty}h(r)\dint_{\mathbb{R}} \dfrac{f^X_{r-t}(z-\zeta_t)}{f^X_r(z)}\mathbb{P}_{Z}(\mathrm{d}z)\mathbb{P}_{\tau}(\mathrm{d}r)}{a_{ac}f_Z(\zeta_t)F_{\tau}(t)+f^X_t(\zeta_t)\dint_{t}^{+\infty}\dint_{\mathbb{R}} \dfrac{f^X_{r-t}(z-\zeta_t)}{f^X_r(z)}\mathbb{P}_{Z}(\mathrm{d}z)\mathbb{P}_{\tau}(\mathrm{d}r)}\mathbb{I}_{\mathscr{Z}^c}(\zeta_t).\label{eqtaugivenzeta}
	\end{align}
	Consequently, we have, $\mathbb{P}$-a.s.,
	$$ \mathbb{E}[\mathbb{I}_{\{\tau \leq t\}}|\zeta_t]=\mathbb{I}_{\mathscr{Z}}(\zeta_t)+\dfrac{a_{ac}f_Z(\zeta_t)F_{\tau}(t)}{a_{ac}f_Z(\zeta_t)F_{\tau}(t)+f^X_t(\zeta_t)\dint_{t}^{+\infty}\dint_{\mathbb{R}} \dfrac{f^X_{r-t}(z-\zeta_t)}{f^X_r(z)}\mathbb{P}_{Z}(\mathrm{d}z)\mathbb{P}_{\tau}(\mathrm{d}r)}\mathbb{I}_{\mathscr{Z}^c}(\zeta_t).$$
	For $t>0$ such that $0<\mathbb{P}(\tau>t)<1$,
	let us define the random set $\mathfrak{N}_t$ given by
	\begin{equation*}
		\mathfrak{N}_t=\biggl\{ \dint_{t}^{+\infty}\dint_{\mathbb{R}} \dfrac{f^X_{r-t}(z-\zeta_t)f^X_t(\zeta_t)}{f^X_r(z)}\mathbb{P}_{Z}(\mathrm{d}z)\mathbb{P}_{\tau}(\mathrm{d}r)\neq 0,\,\,\zeta_t\in\mathscr{Z}^c,\, f_Z(\zeta_t)\neq 0,\tau\leq t \biggr\}.
	\end{equation*}
	It is easy to see that for $t>0$ such that $0<\mathbb{P}(\tau>t)<1$, we have, $\mathbb{P}$-a.s.,
	$$\dint_{t}^{+\infty}\dint_{\mathbb{R}} \dfrac{f^X_{r-t}(z-\zeta_t)f^X_t(\zeta_t)}{f^X_r(z)}\mathbb{P}_{Z}(\mathrm{d}z)\mathbb{P}_{\tau}(\mathrm{d}r)\neq 0.$$ 
	Thus, it follows from Proposition \ref{propstop}, the independence of $\tau$ and $Z$, and the decomposition \eqref{eqP_Zdecom1} that
	\begin{equation}
		\mathbb{P}(\mathfrak{N}_t)=\mathbb{P}(\zeta_t\in\mathscr{Z}^c,\, f_Z(\zeta_t)\neq 0,\tau\leq t)=F_{\tau}(t)\mathbb{P}(Z\in\mathscr{Z}^c,\, f_Z(Z)\neq 0)=a_{ac}F_{\tau}(t)>0.
	\end{equation}
	Hence, for $\omega\in \mathfrak{N}_t$, we have 
	\begin{align}
		\dfrac{a_{ac}f_Z(\zeta_t(\omega))F_{\tau}(t)}{a_{ac}f_Z(\zeta_t(\omega))F_{\tau}(t)+f^X_t(\zeta_t(\omega))\dint_{t}^{+\infty}\dint_{\mathbb{R}} \dfrac{f^X_{r-t}(z-\zeta_t(\omega))}{f^X_r(z)}\mathbb{P}_{Z}(\mathrm{d}z)\mathbb{P}_{\tau}(\mathrm{d}r)}<1,
	\end{align}
	this yields that for $\omega\in \mathfrak{N}_t$, we have 
	\begin{align}
		\mathbb{E}[\mathbb{I}_{\{\tau \leq t\}}|\zeta_t](\omega)<\mathbb{I}_{\{\tau \leq t\}}(\omega).
	\end{align}
	Then, we conclude that there exist certain $t>0$, such that, $\mathbb{P}$-a.s.,
	$ \mathbb{E}[\mathbb{I}_{\{\tau \leq t\}}\vert\zeta_t]\neq \mathbb{I}_{\{\tau \leq t\}}$,
	which completes the proof.
\end{proof}		
\subsection{Markov property of the L\'evy bridge $\zeta$}
In this subsection, we investigate the influence of the distribution of the pinning point $Z$ on the memorylessness of the bridge process $\zeta$. The following theorem states that if the law $\mathbb{P}_{Z}$ of the pinning point $Z$ does not possess an absolutely continuous part, then the bridge process satisfies the Markov property.
\begin{theorem}\label{thmMarkov}
	Assume that $a_{ac}=0$, hence, the bridge $\zeta$ with random length $\tau$ and pinning point $Z$ associated with the L\'evy process $X$ is Markovian.
\end{theorem}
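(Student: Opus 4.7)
The plan is to show that for any $0\le s<t_1<\cdots<t_n$, any bounded Borel $g$ on $\mathbb{R}^n$, and any $A\in\mathcal{F}^{\zeta}_s$ there is a Borel function $\Phi$ (depending on $s,t_1,\ldots,t_n,g$) with
\[
\mathbb{E}\bigl[\mathbb{I}_A \, g(\zeta_{t_1},\ldots,\zeta_{t_n})\bigr]=\mathbb{E}\bigl[\mathbb{I}_A\,\Phi(\zeta_s)\bigr].
\]
A monotone class argument reduces this to rectangular events $A=\{\zeta_{s_1}\in B_1,\ldots,\zeta_{s_k}\in B_k\}$ with $s_1<\cdots<s_k=s$, so I would work throughout with such events and use Fubini without scruple.

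The decisive move is to split the analysis at $\tau$ via $\mathbb{I}_{\{\tau\le s\}}+\mathbb{I}_{\{\tau>s\}}=1$. Under $a_{ac}=0$, Proposition~\ref{propmeasurablity} gives $\{\tau\le s\}=\{\zeta_s\in\mathscr{Z}\}$ modulo $\mathcal{N}_{\mathbb{P}}$, so both indicators are $\sigma(\zeta_s)\vee\mathcal{N}_{\mathbb{P}}$-measurable, which is the lever that will make memorylessness work. On $\{\tau\le s\}$ we have $\zeta_{t_i}=Z=\zeta_s$ for every $i$, hence $g(\zeta_{t_1},\ldots,\zeta_{t_n})=g(\zeta_s,\ldots,\zeta_s)$, already a function of $\zeta_s$ alone. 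On $\{\tau>s\}$ I would condition on $(\tau,Z)=(r,z)$, invoke the bridge Markov property \eqref{eq:f-transition densities} to obtain
\[
\mathbb{E}\bigl[g(X^{r,z}_{t_1},\ldots,X^{r,z}_{t_n})\mid\mathcal{F}^{X^{r,z}}_s\bigr]=\Psi_{r,z}(X^{r,z}_s)
\]
for an explicit kernel $\Psi_{r,z}$ expressed in terms of the densities $f^X$ (handled piecewise according to where $r$ falls among the $t_j$, since $X^{r,z}_{t_j}=z$ whenever $t_j\ge r$), apply \eqref{eqbridgelawQrz} to rewrite each bridge expectation as one against the law of $X$ with Radon-Nikodym density $f^X_{r-s}(z-X_s)/f^X_r(z)$, and then use Fubini to integrate $(r,z)$ against $\mathbb{P}_{\tau}\otimes\mathbb{P}_Z$. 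The outcome is $\mathbb{E}[\mathbb{I}_A(X)\,\Lambda(X_s)]$ where $\Lambda(x)=\int_s^{+\infty}\!\int_{\mathbb{R}}\Psi_{r,z}(x)\,\frac{f^X_{r-s}(z-x)}{f^X_r(z)}\,\mathbb{P}_\tau(\mathrm{d}r)\,\mathbb{P}_Z(\mathrm{d}z)$; reversing the same change of measure converts this back to $\mathbb{E}[\mathbb{I}_A\,\Phi_2(\zeta_s)\mathbb{I}_{\{\tau>s\}}]$ for some Borel $\Phi_2$. Combining with the frozen part yields the conclusion with $\Phi(x)=g(x,\ldots,x)\mathbb{I}_{\mathscr{Z}}(x)+\Phi_2(x)\mathbb{I}_{\mathscr{Z}^c}(x)$.

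The hardest piece is the bookkeeping in the ``active'' case: $\tau$ may fall in any of the intervals $(t_{j-1},t_j]$ (with $t_0=s$), so $\Psi_{r,z}$ splits into several regimes reflecting the pinning of $X^{r,z}$ at $z$ past time $r$, and one must verify that each regime, after integration against $\mathbb{P}_{\tau}\otimes\mathbb{P}_Z$ and the bridge density, still produces a function of $\zeta_s$ alone. The essential ingredient making all of this go through is that Proposition~\ref{propmeasurablity} forces $\mathbb{I}_{\{\tau>s\}}$ to be $\sigma(\zeta_s)\vee\mathcal{N}_{\mathbb{P}}$-measurable, so no information beyond $\zeta_s$ leaks into the $\mathcal{F}^{\zeta}_s$-conditional law of $(\tau,Z)$; this is precisely where the hypothesis $a_{ac}=0$ is used.
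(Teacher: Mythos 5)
Your proposal is correct and follows essentially the same route as the paper's proof: split on the event $\{\tau\le s\}$ versus $\{\tau>s\}$, use Proposition~\ref{propmeasurablity} (the $a_{ac}=0$ hypothesis) to render these indicators $\sigma(\zeta_s)\vee\mathcal N_{\mathbb P}$-measurable and to freeze the process on $\{\tau\le s\}$, then on $\{\tau>s\}$ condition on $(\tau,Z)=(r,z)$, invoke the Markov property of the deterministic bridge $X^{r,z}$, pass to $\mathbb P$ via the Radon--Nikodym density $f^X_{r-s}(z-X_s)/f^X_r(z)$, and Fubini over $\mathbb P_\tau\otimes\mathbb P_Z$. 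The one structural simplification the paper makes that you could borrow is to prove the one-step form $\mathbb{E}[g(\zeta_u)\mid\mathcal F^\zeta_{t_n}]=\mathbb{E}[g(\zeta_u)\mid\zeta_{t_n}]$ for a single future time $u>t_n$ and then deduce the multi-time version by the tower property; this dissolves the ``bookkeeping in the active case'' you flag, since there is only the dichotomy $t_n<\tau\le u$ versus $\tau>u$ to manage, exactly the two terms appearing in \eqref{eqlawofzeta_ugivenzeta_t}. Also note that what \emph{makes} the reversed change of measure land back on a function of $\zeta_s$ alone is the Fubini computation itself (the posterior kernel for $(\tau,Z)$ given $\zeta_s$ on $\{\tau>s\}$ is a genuine function of $\zeta_s$ regardless of $a_{ac}$); the hypothesis $a_{ac}=0$ is spent entirely on the frozen half, as in the paper.
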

\begin{proof}
	Since $\zeta_0=0$, it suffices to prove that for all $n\in \mathbb{N}$, for every bounded measurable function $g$ defined on $\mathbb{R}$, and 
	$0 < t_1 < t_2 <\ldots < t_n < u$, we have $\mathbb{P}$-a.s.,
	\begin{equation}
		\mathbb{E}[g(\zeta_u)\vert\zeta_{t_1},\ldots,\zeta_{t_n}]=\mathbb{E}[g(\zeta_u)\vert\zeta_{t_n}].
	\end{equation}
	Since $a_{ac}=0$, it follows from Proposition \ref{propmeasurablity} that
	\begin{equation*}
		\mathbb{E}[g(\zeta_u)\mathbb{I}_{\{\tau\leq t_n\}}\vert\zeta_{t_1},\ldots,\zeta_{t_n}]=\mathbb{E}[g(\zeta_{t_n})\mathbb{I}_{\{\tau\leq t_n\}}\vert\zeta_{t_1},\ldots,\zeta_{t_n}]=g(\zeta_{t_n})\mathbb{I}_{\{\tau\leq t_n\}}=\mathbb{E}[g(\zeta_u)\mathbb{I}_{\{\tau\leq t_n\}}\vert\zeta_{t_n}].
	\end{equation*}
	Hence, it remains to show that
	\begin{equation*}
		\mathbb{E}[g(\zeta_u)\mathbb{I}_{\{t_n<\tau\}}\vert\zeta_{t_1},\ldots,\zeta_{t_n}]=\mathbb{E}[g(\zeta_u)\mathbb{I}_{\{t_n<\tau\}}\vert\zeta_{t_n}].
	\end{equation*}
	Thus, it suffices to verify that for every bounded measurable function $L$ on $\mathbb{R}^n$, we have
	\begin{equation}
		\mathbb{E}[g(\zeta_u)\mathbb{I}_{\{t_n<\tau\}}L(\zeta_{t_1},\ldots,\zeta_{t_n})]=\mathbb{E}[\mathbb{E}[g(\zeta_u)\vert\zeta_{t_n}]\mathbb{I}_{\{t_n<\tau\}}L(\zeta_{t_1},\ldots,\zeta_{t_n})].\label{eqMart<tau}
	\end{equation}
	To show \eqref{eqMart<tau}, we first need to compute the law of $\zeta_u$ given $\zeta_{t}$ for $0<t<u$. From \eqref{eqtaugivenzeta}, we have for any bounded measurable function $h$ defined on $(0,+\infty)$ we have, $\mathbb{P}$-a.s.,
	\begin{align*}
		\mathbb{E}[h(\tau)|\zeta_t]&=\dint_{0}^{t}\dfrac{h(r)}{F_{\tau}(t)}\mathbb{P}_{\tau}(\mathrm{d}r)\mathbb{I}_{\mathscr{Z}}(\zeta_t)+\dfrac{\dint_{t}^{+\infty}h(r)\dint_{\mathbb{R}} \dfrac{f^X_{r-t}(z-\zeta_t)}{f^X_r(z)}\mathbb{P}_Z(\mathrm{d}z)\mathbb{P}_{\tau}(\mathrm{d}r)}{\dint_{t}^{+\infty}\dint_{\mathbb{R}} \dfrac{f^X_{r-t}(z-\zeta_t)}{f^X_r(z)}\mathbb{P}_Z(\mathrm{d}z)\mathbb{P}_{\tau}(\mathrm{d}r)}\mathbb{I}_{\mathscr{Z}^c}(\zeta_t).
	\end{align*}
	Hence, for any bounded measurable function $U$ defined on $(0,+\infty)\times \mathbb{R}$, with the notation $\zeta^{r,Z}$ is the bridge with random pinning point $Z$ and deterministic length $r$ associated with $X$, we have, $\mathbb{P}$-a.s.,
	\begin{align}
		\mathbb{E}[U(\tau,&Z)\vert\zeta_t=x]=\displaystyle\int_{0}^{+\infty}\mathbb{E}[U(\tau,Z)\vert\zeta_t=x,\tau=r]\mathbb{P}_{\tau\vert \zeta_t=x}(\mathrm{d}r)=\displaystyle\int_{0}^{+\infty}\mathbb{E}[U(r,Z)\vert\zeta^{r,Z}_t=x]\mathbb{P}_{\tau\vert \zeta_t=x}(\mathrm{d}r)\nonumber\\
		&=\dint_{0}^{t}\dfrac{U(r,x)}{F_{\tau}(t)}\mathbb{P}_{\tau}(\mathrm{d}r)\mathbb{I}_{\mathscr{Z}}(x)+\dfrac{\dint_{\mathbb{R}}\dint_{t}^{+\infty}U(r,z) \dfrac{f^X_{r-t}(z-x)}{f^X_r(z)}\mathbb{P}_{\tau}(\mathrm{d}r)\mathbb{P}_Z(\mathrm{d}z)}{\dint_{\mathbb{R}}\dint_{t}^{+\infty} \dfrac{f^X_{r-t}(z-x)}{f^X_r(z)}\mathbb{P}_{\tau}(\mathrm{d}r)\mathbb{P}_Z(\mathrm{d}z)}\mathbb{I}_{\mathscr{Z}^c}(x),
	\end{align}
	where we have used in the latter equality the fact that for any bounded measurable function $g$ defined on $\mathbb{R}$ we have, $\mathbb{P}$-a.s.,
	\begin{equation}
		\mathbb{E}\left[g(Z) \vert \zeta_t^{r, Z}=x\right]=g(x) \mathbb{I}_{\{r \leq t\}}+\frac{\displaystyle\int_{\mathbb{R}} g(z) \dfrac{f^X_{r-t}(z-x)}{f^X_r(z)}\mathbb{P}_{Z}(\mathrm{d}z)}{\displaystyle\int_{\mathbb{R}} \dfrac{f^X_{r-t}(z-x)}{f^X_r(z)}\mathbb{P}_{Z}(\mathrm{d}z)} \mathbb{I}_{\{t<r\}},\label{eqV(r,Z)}
	\end{equation}
	which is obtained by a simple application of Bayes' theorem. By utilizing the fact that
	\begin{equation*}
		\mathbb{E}[g(\zeta_u)\vert\zeta_t=x]=\displaystyle\int_{0}^{+\infty}\displaystyle\int_{\mathbb{R}}\mathbb{E}[g(X_u^{r,z})\vert X^{r,z}_t=x]\mathbb{P}_{(\tau,Z)\vert \zeta_t=x}(\mathrm{d}r,\mathrm{d}z),
	\end{equation*}
	we obtain
	\begin{align}
		\mathbb{E}[g(\zeta_u)\vert\zeta_t]&=g(\zeta_t)\mathbb{I}_{\{\tau\leq t\}}+\dfrac{\dint_{\mathbb{R}}g(z)\dint_{t}^{u} \dfrac{f^X_{r-t}(z-\zeta_t)}{f^X_r(z)}\mathbb{P}_{\tau}(\mathrm{d}r)\mathbb{P}_Z(\mathrm{d}z)}{\dint_{\mathbb{R}}\dint_{t}^{+\infty} \dfrac{f^X_{r-t}(z-\zeta_t)}{f^X_r(z)}\mathbb{P}_{\tau}(\mathrm{d}r)\mathbb{P}_Z(\mathrm{d}z)}\mathbb{I}_{\{t<\tau\}}\nonumber\\
		&+\displaystyle\int_{\mathbb{R}}g(y)f_{u-t}^X(y-\zeta_t)\dfrac{\dint_{\mathbb{R}}\dint_{u}^{+\infty} \dfrac{f^X_{r-u}(z-y)}{f^X_r(z)}\mathbb{P}_{\tau}(\mathrm{d}r)\mathbb{P}_Z(\mathrm{d}z)}{\dint_{\mathbb{R}}\dint_{t}^{+\infty} \dfrac{f^X_{r-t}(z-\zeta_t)}{f^X_r(z)}\mathbb{P}_{\tau}(\mathrm{d}r)\mathbb{P}_Z(\mathrm{d}z)}\mathrm{d}y\,\mathbb{I}_{\{t<\tau\}}.\label{eqlawofzeta_ugivenzeta_t}
	\end{align}
	We are now in a position to demonstrate \eqref{eqMart<tau}. We have,
\begin{small}
	\begin{multline*}
		\mathbb{E}[\mathbb{E}[g(\zeta_u)\vert\zeta_{t_n}]\mathbb{I}_{\{t_n<\tau\}}L(\zeta_{t_1},\ldots,\zeta_{t_n})]=\mathbb{E}\Bigg[\dfrac{\dint_{\mathbb{R}}g(z)\dint_{t_n}^{u} \dfrac{f^X_{r-t_n}(z-\zeta_{t_n})}{f^X_r(z)}\mathbb{P}_{\tau}(\mathrm{d}r)\mathbb{P}_Z(\mathrm{d}z)}{\dint_{\mathbb{R}}\dint_{t_n}^{+\infty} \dfrac{f^X_{r-t_n}(z-\zeta_{t_n})}{f^X_r(z)}\mathbb{P}_{\tau}(\mathrm{d}r)\mathbb{P}_Z(\mathrm{d}z)}\mathbb{I}_{\{t_n<\tau\}}L(\zeta_{t_1},\ldots,\zeta_{t_n})\Bigg]\\
		+\mathbb{E}\Bigg[\displaystyle\int_{\mathbb{R}}g(y)f_{u-t_n}^X(y-\zeta_{t_n})\dfrac{\dint_{\mathbb{R}}\dint_{u}^{+\infty} \dfrac{f^X_{r-u}(z-y)}{f^X_r(z)}\mathbb{P}_{\tau}(\mathrm{d}r)\mathbb{P}_Z(\mathrm{d}z)}{\dint_{\mathbb{R}}\dint_{t_n}^{+\infty} \dfrac{f^X_{r-t_n}(z-\zeta_{t_n})}{f^X_r(z)}\mathbb{P}_{\tau}(\mathrm{d}r)\mathbb{P}_Z(\mathrm{d}z)}\mathrm{d}y\,\mathbb{I}_{\{t_n<\tau\}}L(\zeta_{t_1},\ldots,\zeta_{t_n})\Bigg].\label{eqsplit2Markov}
	\end{multline*}
\end{small}
	From Definition \ref{defzeta^z} and the formula of total probability, we have
	\begin{multline*}
		\mathbb{E}\Bigg[\dfrac{\dint_{\mathbb{R}}g(z)\dint_{t_n}^{u} \dfrac{f^X_{r-t_n}(z-\zeta_{t_n})}{f^X_r(z)}\mathbb{P}_{\tau}(\mathrm{d}r)\mathbb{P}_Z(\mathrm{d}z)}{\dint_{\mathbb{R}}\dint_{t_n}^{+\infty} \dfrac{f^X_{r-t_n}(z-\zeta_{t_n})}{f^X_r(z)}\mathbb{P}_{\tau}(\mathrm{d}r)\mathbb{P}_Z(\mathrm{d}z)}\mathbb{I}_{\{t_n<\tau\}}L(\zeta_{t_1},\ldots,\zeta_{t_n})\Bigg]=\\
		\dint_{\mathbb{R}}\dint_{t_n}^{+\infty}\mathbb{E}\Bigg[\dfrac{\dint_{\mathbb{R}}g(z)\dint_{t_n}^{u} \dfrac{f^X_{r-t_n}(z-X^{r',z'}_{t_n})}{f^X_r(z)}\mathbb{P}_{\tau}(\mathrm{d}r)\mathbb{P}_Z(\mathrm{d}z)}{\dint_{\mathbb{R}}\dint_{t_n}^{+\infty} \dfrac{f^X_{r-t_n}(z-X^{r',z'}_{t_n})}{f^X_r(z)}\mathbb{P}_{\tau}(\mathrm{d}r)\mathbb{P}_Z(\mathrm{d}z)}L(X^{r',z'}_{t_1},\ldots,X^{r',z'}_{t_n})\Bigg]\mathbb{P}_Z(\mathrm{d}z')\mathbb{P}_{\tau}(\mathrm{d}r'),
	\end{multline*}
	Using the fact that the bridge law $\mathbb{P}^{r,z}$ is absolutely continuous with respect to $\mathbb{P}$ with density $M^{r,z}$ given by
	\begin{equation}
		M_t^{r,z}=\dfrac{\mathrm{d}\mathbb{P}^{r,z}\vert_{\mathcal{F}^{X}_{t}}}{\mathrm{d}\mathbb{P}\vert_{\mathcal{F}^{X}_{t}}}=\dfrac{f^X_{r-t}(z-X_{t})}{f^X_{r}(z)},\label{eqbridgelawQrz}
	\end{equation}
	that is, for any bounded $\mathcal{F}^{X}_{t}$- measurable $F$ we have
	\begin{equation}
		\mathbb{E}^{r,z}[F]=\mathbb{E}\bigg[F\,\dfrac{f^X_{r-t}(z-X_{t})}{f^X_{r}(z)}\bigg],\label{eqF}
	\end{equation}
	where $\mathbb{E}^{r,z}$ is the expectation under $\mathbb{P}^{r,z}$, we have 
	\begin{multline*}
		\mathbb{E}\Bigg[\dfrac{\dint_{\mathbb{R}}g(z)\dint_{t_n}^{u} \dfrac{f^X_{r-t_n}(z-X^{r',z'}_{t_n})}{f^X_r(z)}\mathbb{P}_{\tau}(\mathrm{d}r)\mathbb{P}_Z(\mathrm{d}z)}{\dint_{\mathbb{R}}\dint_{t_n}^{+\infty} \dfrac{f^X_{r-t_n}(z-X^{r',z'}_{t_n})}{f^X_r(z)}\mathbb{P}_{\tau}(\mathrm{d}r)\mathbb{P}_Z(\mathrm{d}z)}L(X^{r',z'}_{t_1},\ldots,X^{r',z'}_{t_n})\Bigg]=\\
		\mathbb{E}\Bigg[\dfrac{\dint_{\mathbb{R}}g(z)\dint_{t_n}^{u} \dfrac{f^X_{r-t_n}(z-X_{t_n})}{f^X_r(z)}\mathbb{P}_{\tau}(\mathrm{d}r)\mathbb{P}_Z(\mathrm{d}z)}{\dint_{\mathbb{R}}\dint_{t_n}^{+\infty} \dfrac{f^X_{r-t_n}(z-X_{t_n})}{f^X_r(z)}\mathbb{P}_{\tau}(\mathrm{d}r)\mathbb{P}_Z(\mathrm{d}z)}L(X_{t_1},\ldots,X_{t_n})\dfrac{f^X_{r'-t_n}(z'-X_{t_n})}{f^X_{r'}(z')}\Bigg],
	\end{multline*}
	Fubini and \eqref{eqF} yield,
\begin{align*}
	\mathbb{E}\Bigg[&\dfrac{\dint_{\mathbb{R}}g(z)\dint_{t_n}^{u} \dfrac{f^X_{r-t_n}(z-\zeta_{t_n})}{f^X_r(z)}\mathbb{P}_{\tau}(\mathrm{d}r)\mathbb{P}_Z(\mathrm{d}z)}{\dint_{\mathbb{R}}\dint_{t_n}^{+\infty} \dfrac{f^X_{r-t_n}(z-\zeta_{t_n})}{f^X_r(z)}\mathbb{P}_{\tau}(\mathrm{d}r)\mathbb{P}_Z(\mathrm{d}z)}\mathbb{I}_{\{t_n<\tau\}}L(\zeta_{t_1},\ldots,\zeta_{t_n})\Bigg]\\
	&\quad\quad\quad\quad\quad\quad=\mathbb{E}\Bigg[\dint_{\mathbb{R}}g(z)\dint_{t_n}^{u} \dfrac{f^X_{r-t_n}(z-X_{t_n})}{f^X_r(z)}\mathbb{P}_{\tau}(\mathrm{d}r)\mathbb{P}_Z(\mathrm{d}z)L(X_{t_1},\ldots,X_{t_n})\Bigg]\\
	&\quad\quad\quad\quad\quad\quad=\dint_{\mathbb{R}}g(z)\dint_{t_n}^{u}\mathbb{E}\Bigg[L(X_{t_1},\ldots,X_{t_n})\dfrac{f^X_{r-t_n}(z-X_{t_n})}{f^X_r(z)}\Bigg]\mathbb{P}_{\tau}(\mathrm{d}r)\mathbb{P}_Z(\mathrm{d}z)\\
	&\quad\quad\quad\quad\quad\quad=\dint_{\mathbb{R}}g(z)\dint_{t_n}^{u}\mathbb{E}[L(X^{r,z}_{t_1},\ldots,X^{r,z}_{t_n})]\mathbb{P}_{\tau}(\mathrm{d}r)\mathbb{P}_Z(\mathrm{d}z)\\
	&\quad\quad\quad\quad\quad\quad=\mathbb{E}[g(\zeta_{u})\mathbb{I}_{\{t_n<\tau\leq u\}}L(\zeta_{t_1},\ldots,\zeta_{t_n})].
\end{align*}
	Similarly, Definition \ref{defzeta^z}, Fubini and \eqref{eqF} yield that
	\begin{align*}
		\mathbb{E}\Bigg[\displaystyle\int_{\mathbb{R}}&g(y)f_{u-t_n}^X(y-\zeta_{t_n})\dfrac{\dint_{\mathbb{R}}\dint_{u}^{+\infty} \dfrac{f^X_{r-u}(z-y)}{f^X_r(z)}\mathbb{P}_{\tau}(\mathrm{d}r)\mathbb{P}_Z(\mathrm{d}z)}{\dint_{\mathbb{R}}\dint_{t_n}^{+\infty} \dfrac{f^X_{r-t_n}(z-\zeta_{t_n})}{f^X_r(z)}\mathbb{P}_{\tau}(\mathrm{d}r)\mathbb{P}_Z(\mathrm{d}z)}\mathrm{d}y\,\mathbb{I}_{\{t_n<\tau\}}L(\zeta_{t_1},\ldots,\zeta_{t_n})\Bigg]\\
		&=\mathbb{E}\Bigg[\displaystyle\int_{\mathbb{R}}g(y)f_{u-t_n}^X(y-X_{t_n})\dint_{\mathbb{R}}\dint_{u}^{+\infty} \dfrac{f^X_{r-u}(z-y)}{f^X_r(z)}\mathbb{P}_{\tau}(\mathrm{d}r)\mathbb{P}_Z(\mathrm{d}z)\mathrm{d}yL(X_{t_1},\ldots,X_{t_n})\Bigg]\\
		&=\dint_{\mathbb{R}}\dint_{u}^{+\infty}\mathbb{E}\Bigg[\displaystyle\int_{\mathbb{R}}g(y) \dfrac{f_{u-t_n}^X(y-X_{t_n})f^X_{r-u}(z-y)}{f^X_r(z)}\mathrm{d}yL(X_{t_1},\ldots,X_{t_n})\Bigg]\mathbb{P}_{\tau}(\mathrm{d}r)\mathbb{P}_Z(\mathrm{d}z)\\
		&=\dint_{\mathbb{R}}\dint_{u}^{+\infty}\mathbb{E}\Bigg[\displaystyle\int_{\mathbb{R}}g(y) \dfrac{f_{u-t_n}^X(y-X^{r,z}_{t_n})f^X_{r-u}(z-y)}{f_{r-t_n}^X(z-X^{r,z}_{t_n})}\mathrm{d}yL(X^{r,z}_{t_1},\ldots,X^{r,z}_{t_n})\Bigg]\mathbb{P}_{\tau}(\mathrm{d}r)\mathbb{P}_Z(\mathrm{d}z).
	\end{align*}
	On the other hand, using the Markov property of $X^{r,z}$ we obtain
	\begin{align*}
		\dint_{\mathbb{R}}\dint_{u}^{+\infty}\mathbb{E}\Bigg[\displaystyle\int_{\mathbb{R}}g(y)& \dfrac{f_{u-t_n}^X(y-X^{r,z}_{t_n})f^X_{r-u}(z-y)}{f_{r-t_n}^X(z-X^{r,z}_{t_n})}\mathrm{d}yL(X^{r,z}_{t_1},\ldots,X^{r,z}_{t_n})\Bigg]\mathbb{P}_{\tau}(\mathrm{d}r)\mathbb{P}_Z(\mathrm{d}z)\\
		&=\dint_{\mathbb{R}}\dint_{u}^{+\infty}\mathbb{E}[\mathbb{E}[g(X^{r,z}_{u})\vert X^{r,z}_{t_n}]L(X^{r,z}_{t_1},\ldots,X^{r,z}_{t_n})]\mathbb{P}_{\tau}(\mathrm{d}r)\mathbb{P}_Z(\mathrm{d}z)\\
		&=\dint_{\mathbb{R}}\dint_{u}^{+\infty}\mathbb{E}[g(X^{r,z}_{u})L(X^{r,z}_{t_1},\ldots,X^{r,z}_{t_n})]\mathbb{P}_{\tau}(\mathrm{d}r)\mathbb{P}_Z(\mathrm{d}z)\\
		&=\mathbb{E}[g(\zeta_u)\mathbb{I}_{\{u<\tau\}}L(\zeta_{t_1},\ldots,\zeta_{t_n})].
	\end{align*}
	Combining all this we obtain the asserted result \eqref{eqMart<tau}. The proof of the theorem is completed.
\end{proof}
We are now ready to state the second main result of this paper. This result highlights the importance of the pinning point's distribution in determining the memorylessness and stochastic dynamics of the bridge process.
\begin{theorem}
	Assume that $a_{ac}\neq 0$, hence, there exists non-constant random times $\tau$ such that the L\'evy bridge $\zeta$, with length $\tau$ and pinning point $Z$, does not possess the Markov property with respect to its natural filtration.
\end{theorem}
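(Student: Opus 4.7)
The plan is to proceed by contradiction, exploiting the event $E:=\{\zeta_{t_1}=\zeta_{t_2}\}$ that $\zeta$ takes the same value at two past times $t_1<t_2$. I shall show that, when $a_{ac}\ne 0$, this event creates an informational asymmetry between $\sigma(\zeta_{t_1},\zeta_{t_2})$ and $\sigma(\zeta_{t_2})$ that is too sharp to be reconciled with Markovianity.

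First I would pick a specific non-constant $\tau$, independent of $X$ and $Z$, such that $0<F_\tau(t)<1$ for every $t>0$; any exponential law will do. Fix $0<t_1<t_2<u$. A computation along the lines of Proposition \ref{propstop}, conditioning on $(\tau,Z)=(r,z)$ and splitting according to whether $r\le t_1$, $t_1<r\le t_2$, or $t_2<r$, yields $\mathbb{P}(E\,\bigtriangleup\,\{\tau\le t_1\})=0$. Indeed, on $\{r\le t_1\}$ we have $X_{t_1}^{r,z}=X_{t_2}^{r,z}=z$ trivially; on $\{t_1<r\le t_2\}$ the one-time absolute continuity in Assumption \ref{hydensityexistence} gives $\mathbb{P}(X_{t_1}^{r,z}=z)=0$; and on $\{t_2<r\}$ the joint density of $(X_{t_1}^{r,z},X_{t_2}^{r,z})$ on $\mathbb{R}^2$ furnished by \eqref{eqfinitedimensionaldensities} forces $\mathbb{P}(X_{t_1}^{r,z}=X_{t_2}^{r,z})=0$. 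Consequently $\mathbb{P}(E)=F_\tau(t_1)>0$ and $\zeta_u=Z=\zeta_{t_2}$ on $E$.

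Now assume, for contradiction, that $\zeta$ is Markov with respect to $\mathbb{F}^{\zeta}$. Since $E=\{(\zeta_{t_1},\zeta_{t_2})\in\{(x,x):x\in\mathbb{R}\}\}$ lies in $\sigma(\zeta_{t_1},\zeta_{t_2})$, for every bounded Borel $g$ we have $\mathbb{E}[g(\zeta_u)\mathbb{I}_E\mid\zeta_{t_1},\zeta_{t_2}]=g(\zeta_{t_2})\mathbb{I}_E$. The Markov hypothesis then gives, via the tower property,
\begin{equation*}
\bigl(\mathbb{E}[g(\zeta_u)\mid\zeta_{t_2}]-g(\zeta_{t_2})\bigr)\,\mathbb{P}(E\mid\zeta_{t_2})=0,\quad\mathbb{P}\text{-a.s.}
\end{equation*}
Replaying the computation of \eqref{eqtaugivenzeta} with $h=\mathbb{I}_{(0,t_1]}$ in place of $\mathbb{I}_{(0,t_2]}$ shows that $\mathbb{P}(E\mid\zeta_{t_2})>0$ on the set $\{\zeta_{t_2}\in\mathscr{Z}^c,\,f_Z(\zeta_{t_2})\ne 0\}$, which by the argument used for $\mathfrak{N}_{t_2}$ in Proposition \ref{propmeasurablity} has probability at least $a_{ac}F_\tau(t_2)>0$. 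Pushing this event forward under $\zeta_{t_2}$ produces a Lebesgue-positive subset $A\subset\mathscr{Z}^c$ on which the identity $\mathbb{E}[g(\zeta_u)\mid\zeta_{t_2}=x]=g(x)$ must hold for a.e.\ $x\in A$ and every bounded Borel $g$.

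Splitting the left-hand side according to $\{\tau\le t_2\}$ and $\{\tau>t_2\}$ and using $\zeta_u=x$ on the former, the forced identity rearranges to $\mathbb{E}[g(\zeta_u)\mid\zeta_{t_2}=x,\tau>t_2]=g(x)$ on the subset $A':=\{x\in A:\mathbb{P}(\tau>t_2\mid\zeta_{t_2}=x)>0\}$, which remains of positive Lebesgue measure since $F_\tau(t_2)<1$. As this identity must hold for every bounded Borel $g$ simultaneously, it forces the conditional law of $\zeta_u$ given $\{\zeta_{t_2}=x,\tau>t_2\}$ to be the Dirac mass at $x$ for a.e.\ $x\in A'$. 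A Bayes-type computation modelled on that producing \eqref{eqlawofzeta_ugivenzeta_t} shows, however, that this conditional law has a genuine absolutely continuous component on $\mathbb{R}$, coming from the event $\{u<\tau\}$ together with the Lévy-bridge joint density of $(X_{t_2}^{r,z},X_u^{r,z})$; this supplies the desired contradiction. I expect the main technical obstacle to lie in the second paragraph, where the two-time absolute continuity of the bridge law must be combined with the construction of $\zeta$ in order to turn the coincidence event $E$ into an exact almost-sure indicator of $\{\tau\le t_1\}$, since the remainder of the argument is essentially algebraic once this identification is in hand.
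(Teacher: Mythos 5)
Your proof is correct and takes a genuinely different route from the paper. The paper's argument sends $u\to\infty$ so that $\zeta_u\to Z$, computes the Bayes formulas for $\mathbb{E}[g(Z)\mid\zeta_{t_1},\zeta_{t_2}]$ and $\mathbb{E}[g(Z)\mid\zeta_{t_2}]$ explicitly for a cleverly chosen window ($t_1$ strictly below the support of $\tau$, so $F_\tau(t_1)=0$ and $0<F_\tau(t_2)<1$), equates them under the Markov hypothesis, cancels, and exhibits a concrete functional identity $U_{t_1,t_2}(\zeta_{t_1},\zeta_{t_2})=f^X_{t_2}(\zeta_{t_2})/F_\tau(t_2)$ which it then falsifies for a two-point $\tau$. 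You instead take $\tau$ with $F_\tau$ strictly between $0$ and $1$ everywhere, introduce the coincidence event $E=\{\zeta_{t_1}=\zeta_{t_2}\}$, prove $E$ agrees a.s.\ with $\{\tau\leq t_1\}$, and extract from the Markov hypothesis that the regular conditional law of $\zeta_u$ given $\{\zeta_{t_2}=x,\tau>t_2\}$ must be $\delta_x$ on a Lebesgue-positive set of $x$, contradicting the manifest absolutely continuous component contributed by $\{u<\tau\}$. What your approach buys is a more conceptual, less computational picture: the coincidence event exposes directly that when $Z$ has an absolutely continuous part, the single observation $\zeta_{t_2}$ cannot separate the cases $\tau\le t_1$ and $\tau>t_2$, whereas the pair $(\zeta_{t_1},\zeta_{t_2})$ can; the paper's approach instead produces the explicit conditional laws that may be useful for further analysis. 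Two small points you should make explicit to close the argument: (i) the ``for a.e.\ $x$ and every bounded Borel $g$'' requires the usual reduction to a countable determining family $\{g_n\}$ before intersecting the null sets and concluding that the regular conditional law is Dirac; and (ii) ``pushing the event forward'' should be replaced by the observation that $\mathbb{P}_{\zeta_{t_2}}$ restricted to $\mathscr{Z}^c$ is absolutely continuous w.r.t.\ Lebesgue (from the Radon--Nikodym derivative $q_{t_2}$ in the proof of Proposition \ref{propmeasurablity}), so one should take $A$ to be the set where that density is strictly positive and $f_Z>0$, on which $\mathbb{P}_{\zeta_{t_2}}$ and Lebesgue are equivalent and of positive measure.
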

\begin{proof}
	Assume that $\zeta$ is Markovian. Hence, for all $0<t<u$, and for all bounded continuous function $g$ we have, $\mathbb{P}$-a.s.,
	\begin{equation}
		\mathbb{E}[g(\zeta_{u}) \vert \mathcal{F}^{\zeta}_{t}]=\mathbb{E}[g(\zeta_{u}) \vert \zeta_{t}].
	\end{equation}
	Since $\zeta_u=Z$, on $\{ \tau\leq u\}$ and $\mathbb{P}(\tau<+\infty)=1$, $\zeta_u$ converges to $Z$ a.s. as $u$ goes to $+\infty$. Thus,
	\begin{equation}
		\mathbb{E}[g(Z) \vert \mathcal{F}^{\zeta}_{t}]=\lim\limits_{u\rightarrow +\infty}\mathbb{E}[g(\zeta_{u}) \vert \mathcal{F}^{\zeta}_{t}]=\lim\limits_{u\rightarrow +\infty}\mathbb{E}[g(\zeta_{u}) \vert \zeta_{t}]=\mathbb{E}[g(Z) \vert \zeta_{t}].\label{eqAssumeMarkov}
	\end{equation}
	With the notation $\zeta^{r,Z}$ is the bridge with random pinning point $Z$ and deterministic length $r$ associated with $X$, we have, $\mathbb{P}$-a.s.,
	\begin{align}
		\mathbb{E}[g(Z)\vert\zeta_t=x]&=\displaystyle\int_{0}^{+\infty}\mathbb{E}[g(Z)\vert\zeta_t=x,\tau=r]\mathbb{P}_{\tau\vert \zeta_t=x}(\mathrm{d}r)=\displaystyle\int_{0}^{+\infty}\mathbb{E}[g(Z)\vert\zeta^{r,Z}_t=x]\mathbb{P}_{\tau\vert \zeta_t=x}(\mathrm{d}r).\label{eqV(tau,Z)}
	\end{align}
	Thus, \eqref{eqtaugivenzeta}, \eqref{eqV(r,Z)}, and \eqref{eqV(tau,Z)} show that
	\begin{align}
		\mathbb{E}[g(Z)\vert\zeta_t]&=g(\zeta_t)\mathbb{I}_{\mathscr{Z}}(\zeta_t)+\nonumber\\&\dfrac{a_{ac}f_Z(\zeta_t)g(\zeta_t)F_{\tau}(t)+f^X_t(\zeta_t)\dint_{t}^{+\infty}\dint_{\mathbb{R}}g(z) \dfrac{f^X_{r-t}(z-\zeta_t)}{f^X_r(z)}\mathbb{P}_{Z}(\mathrm{d}z)\mathbb{P}_{\tau}(\mathrm{d}r)}{a_{ac}f_Z(\zeta_t)F_{\tau}(t)+f^X_t(\zeta_t)\dint_{t}^{+\infty}\dint_{\mathbb{R}} \dfrac{f^X_{r-t}(z-\zeta_t)}{f^X_r(z)}\mathbb{P}_{Z}(\mathrm{d}z)\mathbb{P}_{\tau}(\mathrm{d}r)}\mathbb{I}_{\mathscr{Z}^c}(\zeta_t).\label{erZgivenzeta}
	\end{align}
	Let $0<t_1<t_2$ such that $F_{\tau}(t_1)=0$ and $0<F_{\tau}(t_2)<1$. Let $B_1,B_2 \in \mathcal{B}(\mathbb{R})$, the formula of total probability and the fact that $\mathbb{P}(\tau\leq t_1)=0$ yields,
	\begin{align*}
		\mathbb{P}\left((\zeta_{t_1},\zeta_{t_2})\in B_1 \times B_2\vert \tau=r\right)&=\dint_{\mathbb{R}}\mathbb{P}\left((\zeta_{t_1},\zeta_{t_2})\in B_1 \times B_2\vert \tau=r,Z=z\right)\mathbb{P}_{Z}(\mathrm{d}z)\\
		&=\dint_{\mathbb{R}}\mathbb{P}\left((X_{t_1}^{r,z},X_{t_2}^{r,z})\in B_1 \times B_2\right)\mathbb{P}_{Z}(\mathrm{d}z)
		\\  &=\dint_{B_{1}\times B_{2}}q_{t_{1},t_{2}}(r,x_{1},x_{2})\,\mathrm{d}x_1\otimes(\mathrm{d}x_2+\mathbb{P}_{Z}^s(\mathrm{d}x_2)),
	\end{align*}
	where
	\begin{multline*}
		q_{t_1,t_2}(r,x_1,x_2)=\Big((1-a_{ac})\mathbb{I}_{\mathscr{Z}}(x_2)+a_{ac}f_Z(x_2)\mathbb{I}_{\mathscr{Z}^c}(x_2)\Big)\dfrac{f^X_{r-t_1}(x_2-x_1)f^X_{t_1}(x_1)}{f^X_{r}(x_2)}\mathbb{I}_{\{t_1<r\leq t_2\}}\\+f^X_{t_1}(x_1)f^X_{t_2-t_1}(x_2-x_1)\displaystyle\int_{\mathbb{R}} \dfrac{f^X_{r-t_2}(z-x_2)}{f^X_r(z)}\mathbb{P}_{Z}(\mathrm{d}z)\mathbb{I}_{\mathscr{Z}}(x_2)\,\mathbb{I}_{\{t_2<r\}}.
	\end{multline*} 
	Hence, it follows from Bayes' theorem that for all bounded measurable functions $h$ defined on $(0,+\infty)$, we have, $\mathbb{P}$-a.s.,
	\begin{align}
		&\mathbb{E}[h(\tau)|\zeta_{t_1}=x_1,\zeta_{t_2}=x_2]=\mathbb{I}_{\mathscr{Z}}(x_2)\dfrac{\dint_{t_1}^{t_2}h(r)\dfrac{f^X_{r-t_1}(x_2-x_1)}{f^X_{r}(x_2)}\mathbb{P}_{\tau}(\mathrm{d}r)}{\dint_{t_1}^{t_2}\dfrac{f^X_{r-t_1}(x_2-x_1)}{f^X_{r}(x_2)}\mathbb{P}_{\tau}(\mathrm{d}r)}+\mathbb{I}_{\mathscr{Z}^c}(x_2)\times\nonumber\\
		&\dfrac{a_{ac}f_Z(x_2)\dint_{t_1}^{t_2}h(r)\dfrac{f^X_{r-t_1}(x_2-x_1)}{f^X_{r}(x_2)}\mathbb{P}_{\tau}(\mathrm{d}r)+f^X_{t_2-t_1}(x_2-x_1)\dint_{t_2}^{+\infty}h(r)\displaystyle\int_{\mathbb{R}} \dfrac{f^X_{r-t_2}(z-x_2)}{f^X_r(z)}\mathbb{P}_Z(\mathrm{d}z)\mathbb{P}_{\tau}(\mathrm{d}r)}{a_{ac}f_Z(x_2)\dint_{t_1}^{t_2}\dfrac{f^X_{r-t_1}(x_{2}-x_{1})}{f^X_{r}(x_{2})}\mathbb{P}_{\tau}(\mathrm{d}r)+f^X_{t_2-t_1}(x_2-x_{1})\dint_{t_2}^{+\infty}\displaystyle\int_{\mathbb{R}} \dfrac{f^X_{r-t_2}(z-x_2)}{f^X_r(z)}\mathbb{P}_Z(\mathrm{d}z)\mathbb{P}_{\tau}(\mathrm{d}r)}.\label{eqtaugivenzeta1}
	\end{align}
	On the other hand, we have
	\begin{align}
		\mathbb{E}[g(Z)\vert\zeta_{t_1}=x_1,\zeta_{t_2}=x_2]&=\displaystyle\int_{0}^{+\infty}\mathbb{E}[g(Z)\vert\zeta_{t_1}=x_1,\zeta_{t_2}=x_2,\tau=r]\mathbb{P}_{\tau\vert \zeta_{t_1}=x_1,\zeta_{t_2}=x_2}(\mathrm{d}r)\nonumber\\
		&=\displaystyle\int_{0}^{+\infty}\mathbb{E}[g(Z)\vert\zeta^{r,Z}_{t_2}=x]\mathbb{P}_{\tau\vert \zeta_{t_1}=x_1,\zeta_{t_2}=x_2}(\mathrm{d}r).\label{eqV(tau,Z1)}
	\end{align}
	Thus, \eqref{eqV(r,Z)}, \eqref{eqtaugivenzeta1}, and \eqref{eqV(tau,Z1)} show that
	\begin{align}
		\mathbb{E}[g(Z)&\vert\zeta_{t_1},\zeta_{t_2}]=g(\zeta_{t_2})\mathbb{I}_{\mathscr{Z}}(\zeta_{t_2})+\nonumber\\
		&\dfrac{a_{ac}f_Z(\zeta_{t_2})g(\zeta_{t_2})+U_{t_1,t_2}(\zeta_{t_1},\zeta_{t_2})\dint_{t_2}^{+\infty}\displaystyle\int_{\mathbb{R}} g(z) \dfrac{f^X_{r-t_2}(z-\zeta_{t_2})}{f^X_r(z)}f_Z(z)\mathrm{d}z\mathbb{P}_{\tau}(\mathrm{d}r)}{a_{ac}f_Z(\zeta_{t_2})+U_{t_1,t_2}(\zeta_{t_1},\zeta_{t_2})\dint_{t_2}^{+\infty}\displaystyle\int_{\mathbb{R}} \dfrac{f^X_{r-t_2}(z-\zeta_{t_2})}{f^X_r(z)}f_Z(z)\mathrm{d}z\mathbb{P}_{\tau}(\mathrm{d}r)}\mathbb{I}_{\mathscr{Z}^c}(\zeta_{t_2}),\label{eqZgivenzete1zeta2}
	\end{align}
	where, 
	$$ U_{t_1,t_2}(x_{1},x_{2})=\dfrac{f^X_{t_2-t_1}(x_{2}-x_{1})}{\dint_{t_1}^{t_2}\dfrac{f^X_{r-t_1}(x_{2}-x_{1})}{f^X_{r}(x_{2})}\mathbb{P}_{\tau}(\mathrm{d}r)},\,\,x_1,\,x_2\in \mathbb{R},\,\, 0<t_1<t_2. $$
	From \eqref{erZgivenzeta}, we have, $\mathbb{P}$-a.s.,
	\begin{align}
		\mathbb{E}[g(Z)\vert\zeta_{t_2}]&=g(\zeta_{t_2})\mathbb{I}_{\mathscr{Z}}(\zeta_{t_2})+\nonumber\\
		&\dfrac{a_{ac}f_Z(\zeta_{t_2})g(\zeta_{t_2})+\dfrac{f^X_{t_2}(\zeta_{t_2})}{F_{\tau}(t_2)}\dint_{t_2}^{+\infty}\dint_{\mathbb{R}}g(z) \dfrac{f^X_{r-t_2}(z-\zeta_{t_2})}{f^X_r(z)}f_Z(z)\mathrm{d}z\mathbb{P}_{\tau}(\mathrm{d}r)}{a_{ac}f_Z(\zeta_{t_2})F_{\tau}(t_2)+\dfrac{f^X_{t_2}(\zeta_{t_2})}{F_{\tau}(t_2)}\dint_{t_2}^{+\infty}\dint_{\mathbb{R}} \dfrac{f^X_{r-_2}(z-\zeta_{t_2})}{f^X_r(z)}f_Z(z)\mathrm{d}z\mathbb{P}_{\tau}(\mathrm{d}r)}\mathbb{I}_{\mathscr{Z}^c}(\zeta_{t_2}).\label{eqZgivenzeta2}
	\end{align}
	Thus, it follows from \eqref{eqAssumeMarkov}, \eqref{eqZgivenzete1zeta2}, and \eqref{eqZgivenzeta2} that
	\begin{equation}
		U_{t_1,t_2}(\zeta_{t_1},\zeta_{t_2})=\dfrac{f^X_{t_2}(\zeta_{t_2})}{F_{\tau}(t_2)},
	\end{equation}
	contradiction, since for instance if $\tau$ is a two-point random variable such that $\mathbb{P}(\tau=T_1)=\mathbb{P}(\tau=T_2)=\frac{1}{2}$ and $t_1<T_1<t_2<T_2$, we have
	\begin{equation}
		\dfrac{f^X_{t_2-t_1}(\zeta_{t_2}-\zeta_{t_1})}{f^X_{T_1-t_1}(\zeta_{t_2}-\zeta_{t_1})}\,f^X_{T_1}(\zeta_{t_2})\neq f^X_{t_2}(\zeta_{t_2}).
	\end{equation}
	This completes the proof.
\end{proof}
Since the pinning point has a significant influence on the Markov property of the bridge process, it is worth noting that the Markov property still holds when considering the two-dimensional process consisting of both the pinning point and the bridge process.
\begin{theorem}
	The two-dimensional process $Y$ defined by $$Y_t=(Z,\zeta_{t}),\quad t\geq 0,$$
	is a Markov process with respect to its natural filtration. Moreover, for any $0<t<u$ and for every bounded measurable function $G$ defined on $\mathbb{R}^2$, we have
	\begin{align}
		\mathbb{E}[G(Y_u)\vert Y_{t}]&=G(Z,Z) \Big( \mathbb{I}_{\{\zeta_t=Z\}} 	+ \dfrac{\displaystyle\int_{t}^{u}\dfrac{f^X_{r-t}(Z-\zeta_t)}{f^X_r(Z)}\mathbb{P}_{\tau}(\mathrm{d}r)}{\displaystyle\int_{t}^{+\infty}\dfrac{f^X_{r-t}(Z-\zeta_t)}{f^X_r(Z)}\mathbb{P}_{\tau}(\mathrm{d}r)}\mathbb{I}_{\{\zeta_t\neq Z\}} \Big)\nonumber  \\ &+\dint_{\mathbb{R}} G(Z,y)f^X_{u-t}(y-\zeta_t)\dfrac{\dint_u^{+\infty}\,\dfrac{f^X_{r-u}(Z-y)}{f^X_{r}(Z)}\,\mathbb{P}_{\tau}(\mathrm{d}r)}{\displaystyle\int_{t}^{+\infty}\dfrac{f^X_{r-t}(Z-\zeta_t)}{f^X_r(Z)}\mathbb{P}_{\tau}(\mathrm{d}r)}\mathrm{d}y\,\mathbb{I}_{\{\zeta_t\neq Z\}}.
	\end{align}
\end{theorem}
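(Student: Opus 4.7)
The first observation is that since $Y_t = (Z, \zeta_t)$ carries $Z$ explicitly as its first coordinate, the natural filtration of $Y$ decomposes as $\mathcal{F}^{Y}_t = \sigma(Z) \vee \mathcal{F}^{\zeta}_t$. Hence the Markov property of $Y$ is equivalent to the assertion that for every $0 < t < u$ and every bounded measurable $G : \mathbb{R}^2 \to \mathbb{R}$,
\begin{equation*}
\mathbb{E}[G(Z, \zeta_u) \mid \sigma(Z) \vee \mathcal{F}^{\zeta}_t] = \mathbb{E}[G(Z, \zeta_u) \mid Z, \zeta_t].
\end{equation*}
By a monotone class argument it suffices to handle product functions $G(z, y) = f(z) g(y)$, so that the problem reduces to
\begin{equation*}
\mathbb{E}[g(\zeta_u) \mid \sigma(Z) \vee \mathcal{F}^{\zeta}_t] = \mathbb{E}[g(\zeta_u) \mid Z, \zeta_t].
\end{equation*}

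Next I would appeal to Remark \ref{remarkzeta^z}: conditionally on $\{Z = z\}$, the process $\zeta$ has the same finite-dimensional distributions as the L\'evy bridge $\zeta^{\tau, z}$ with deterministic pinning point $z$ and random length $\tau$ associated with $X$. The Markov property of $\zeta^{\tau, z}$ is precisely what was established in \cite{EHL(Levy)}; alternatively, it is a special case of Theorem \ref{thmMarkov}, corresponding to the degenerate discrete distribution $\mathbb{P}_Z = \delta_z$, for which $a_{ac} = 0$. Working on a regular conditional probability given $Z$, this Markov property transfers to $\zeta$ conditionally on $Z$, which is exactly the reduced identity above, and thereby establishes the Markov property of $Y$.

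To derive the explicit transition formula, I would condition on the three disjoint events $\{\tau \leq t\}$, $\{t < \tau \leq u\}$, and $\{u < \tau\}$. On $\{\tau \leq t\}$, Proposition \ref{propstop} identifies this set up to a $\mathbb{P}$-null set with $\{\zeta_t = Z\}$; there $\zeta_u = Z$ and $G(Y_u) = G(Z, Z)$, which produces the first summand $G(Z,Z) \mathbb{I}_{\{\zeta_t = Z\}}$. On $\{\tau > t\}$, an application of Bayes' theorem analogous to the computation \eqref{eqtaugivenzeta} in the proof of Proposition \ref{propmeasurablity}, but performed jointly in $(Z, \zeta_t)$ rather than in $\zeta_t$ alone, shows that the conditional law of $\tau$ given $(Z, \zeta_t)$ restricted to $\{\tau > t\}$ is proportional to $\frac{f^X_{r-t}(Z - \zeta_t)}{f^X_r(Z)} \mathbb{P}_\tau(\mathrm{d}r)$ on $(t, +\infty)$. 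Splitting this into the subevent $r \in (t, u]$, where $\zeta_u = Z$ and so $G(Y_u) = G(Z, Z)$, and the subevent $r > u$, where $\zeta_u$ has the bridge density $\frac{f^X_{u-t}(y - \zeta_t)\, f^X_{r-u}(Z - y)}{f^X_{r-t}(Z - \zeta_t)}\,\mathrm{d}y$ from \eqref{eq:f-transition densities}, and integrating $G(Z, \cdot)$ against the resulting measure produces the remaining two summands in the stated formula.

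The main technical obstacle will be the Bayes' formula computation of the conditional law of $\tau$ given $(Z, \zeta_t)$ on $\{\tau > t\}$, together with the subsequent bookkeeping needed to identify the integrals over $(t, u]$ and $(u, +\infty)$ with the two pieces appearing in the statement. These calculations are straightforward analogues of those already carried out in the paper but require care to keep the conditioning on $Z$ consistent throughout; once they are performed, the identification of $\mathbb{E}[G(Y_u) \mid Y_t]$ with the displayed right-hand side is direct.
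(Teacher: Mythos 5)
Your proposal is correct and takes essentially the same route as the paper: both arguments reduce the Markov property of $Y$ to the Markov property of the bridge $\zeta^z$ with deterministic pinning point and random length, by conditioning on $Z=z$ (via $\mathcal{F}^Y_t=\sigma(Z)\vee\mathcal{F}^\zeta_t$), and then invoke the transition law of $\zeta^z$ from \cite[Theorem 3.8]{EHL(Levy)} to obtain the explicit formula. The only genuine addition on your part is the remark that the Markov property of $\zeta^z$ can alternatively be viewed as the degenerate case $\mathbb{P}_Z=\delta_z$ of Theorem~\ref{thmMarkov} (since then $a_{ac}=0$), which is a nice internal consistency check but does not change the structure of the argument; your sketch of the Bayes computation for the transition kernel simply reproduces what the cited theorem packages.
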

\begin{proof}
	Using the fact that $\zeta^z$ is a Markov process with respect to its natural filtration (see \cite[Theorem 3.8]{EHL(Levy)}), for all $n\in \mathbb{N}$ and
	$0 < t_1 < t_2 <\ldots < t_n < u$, for all measurable functions $G$ defined on $\mathbb{R}^2$ such that $G(Z,\zeta_u)$ is integrable, we have, $\mathbb{P}$-a.s,
	\begin{align}
		\mathbb{E}[G(Y_u)\vert Y_{t_1}=y_1,\ldots,Y_{t_n}=y_n]&=\mathbb{E}[G(Z,\zeta_u)\vert \zeta_{t_1}=x_1,\ldots,\zeta_{t_n}=x_n,Z=z]\nonumber\\
		&=\mathbb{E}[G(z,\zeta_u^z)\vert \zeta_{t_1}^z=x_1,\ldots,\zeta_{t_n}^z=x_n]\nonumber\\
		&=\mathbb{E}[G(z,\zeta_u^z)\vert \zeta_{t_n}^z=x_n]\nonumber\\
		&=\mathbb{E}[G(Y_u)\vert Y_{t_n}=y_n].
	\end{align}
	Again, it follows from Theorem 3.8 in \cite{EHL(Levy)} that for $y=(z,x)\in\mathbb{R}^2$, for any $0<t<u$ and for every bounded measurable function $G$, we have
	\begin{align}
		\mathbb{E}[G(Y_u)\vert Y_{t}=y]&=[G(z,\zeta_u^z)\vert \zeta_{t}^z=x]=G(z,z) \Big( \mathbb{I}_{\{x=z\}} 	+ \dfrac{\displaystyle\int_{t}^{u}\dfrac{f^X_{r-t}(z-x)}{f^X_r(z)}\mathbb{P}_{\tau}(\hbox{d}r)}{\displaystyle\int_{t}^{+\infty}\dfrac{f^X_{r-t}(z-x)}{f^X_r(z)}\mathbb{P}_{\tau}(\hbox{d}r)}\mathbb{I}_{\{x\neq z\}} \Big)\nonumber  \\ &+\dint_{\mathbb{R}} G(z,y)f^X_{u-t}(y-x)\dfrac{\dint_u^{+\infty}\,\dfrac{f^X_{r-u}(z-y)}{f^X_{r}(z)}\,\mathbb{P}_{\tau}(\hbox{d}r)}{\displaystyle\int_{t}^{+\infty}\dfrac{f^X_{r-t}(z-x)}{f^X_r(z)}\mathbb{P}_{\tau}(\hbox{d}r)}\hbox{d}y\,\mathbb{I}_{\{x\neq z\}}.
	\end{align}
	This completes the proof.
\end{proof}

\end{document}